\theoremstyle{definition}
\newtheorem*{theoA}{Theorem A}
\newtheorem*{theoB}{Theorem B}
\newtheorem*{theoC}{Theorem C}
\newtheorem*{theoD}{Theorem D}
\newtheorem*{theoE}{Theorem E}
\newtheorem*{theoF}{Theorem F}
\newtheorem*{theoG}{Theorem G}
\newtheorem*{theoH}{Theorem H}
\newtheorem*{theoI}{Theorem I}
\newtheorem{ques}{Question}[section]
\newtheorem{theo}{Theorem}[section]
\newtheorem{lem}{Lemma}[section]
\newtheorem{defi}{Definition}[section]
\newtheorem{rem}{Remark}[section]
\newcommand{\ol}{\overline}
\newcommand{\be}{\begin{equation}}
\newcommand{\ee}{\end{equation}}
\newcommand{\beas}{\begin{eqnarray*}}
\newcommand{\eeas}{\end{eqnarray*}}
\newcommand{\bea}{\begin{eqnarray}}
\newcommand{\eea}{\end{eqnarray}}
\numberwithin{equation}{section}
\begin{document}
\title[Uniqueness of meromorphic functions]{Unique range sets of meromorphic functions of non-integer finite order}
\date{}
\author[B. Chakraborty, et al.]{Bikash Chakraborty$^{1}$, Amit Kumar Pal$^{2}$, Sudip Saha$^{3}$ and Jayanta Kamila$^{4}$}
\date{}
\address{$^{1}$Department of Mathematics, Ramakrishna Mission Vivekananda Centenary College, Rahara,
West Bengal 700 118, India.}
\email{bikashchakraborty.math@yahoo.com, bikash@rkmvccrahara.org}
\address{$^{2}$Department of Mathematics, University of Kalyani, Kalyani, West Bengal 741 235, India.}
\email{mail4amitpal@gmail.com}
\address{$^{3}$Department of Mathematics, Ramakrishna Mission Vivekananda Centenary College, Rahara,
West Bengal 700 118, India.}
\email{sudipsaha814@gmail.com}
\address{$^{4}$Department of Mathematics, Ramakrishna Mission Vivekananda Centenary College, Rahara,
West Bengal 700 118, India.}
\email{kamilajayanta@gmail.com}
\maketitle
\let\thefootnote\relax
\footnotetext{2010 Mathematics Subject Classification: 30D30, 30D20, 30D35.}
\footnotetext{Key words and phrases: Unique range set, Weighted Sharing, Order.}
\footnotetext{Corresponding Author: Bikash Chakraborty}
\begin{abstract} This paper studies the uniqueness of two non-integral finite ordered meromorphic functions with finitely many poles when they share two finite sets. Also, studies an answer to a question posed by Gross for a particular class of meromorphic functions. Moreover, some observations are made on some results due to Sahoo and Karmakar ( Acta Univ. Sapientiae, Mathematica, DOI: 10.2478/ausm-2018-0025) and Sahoo and Sarkar (Bol. Soc. Mat. Mex., DOI: 10.1007/s40590-019-00260-4).
\end{abstract}
\section{Introduction}
We use $M(\mathbb{C})$ to denote the field of all meromorphic functions in $\mathbb{C}$. Also, by $M_{1}(\mathbb{C})$, we denote the class of meromorphic
functions which have finitely many poles in $\mathbb{C}$. The order $\rho( f )$ of the function $f\in M(\mathbb{C})$ is defined as
$$\rho(f)=\limsup\limits_{r\to\infty}\frac{\ln T(r,f)}{\ln r}.$$
Let $S\subset\mathbb{C}\cup\{\infty\}$ be a non-empty set with distinct elements and $f\in M(\mathbb{C})$. We set $$E_{f}(S)=\bigcup\limits_{a\in S}\{z~:~f(z)-a=0\},$$
where a zero of $f-a$ with multiplicity $m$ counts $m$ times in $E_{f}(S)$. Let $\ol{E}_{f}(S)$ denote the collection of distinct elements in $E_{f}(S)$.\par
Let $g\in M(\mathbb{C})$. We say that two functions $f$ and $g$ share the set $S$ CM (resp. IM) if $E_{f}(S)=E_{g}(S)$ (resp. $\overline{E}_{f}(S)=\overline{E}_{g}(S)$).\par
F. Gross (\cite{G2}) first studied the uniqueness problem of meromorphic functions that share distinct sets instead of values. From then, the uniqueness theory of meromorphic functions under set sharing environment has become one of the important branch in the value distribution theory.\par
In 1977, F. Gross (\cite{G2}) proved that there exist three finite sets $S_{j}~(j=1,2,3)$  such that if two non-constant entire functions $f$ and $g$ share them, then $f\equiv g$. In the same paper, he asked the following question:
\begin{ques}
Can one find two (or possibly even one) finite set $S_{j}~(j=1,2)$ such that if two non-constant entire functions $f$ and $g$ share them, then $f\equiv g$?
\end{ques}
In connection to the Gross question, in 1994, H. X. Yi (\cite{2Yi}) proved the following theorem, which answered the Question 1.1 in the affirmative.
\begin{theoA}
Let $S_{1}=\{z~:~z^{n}-1=0\}$ and $S_{2}=\{a\}$, where $n\geq 5$, $a\not=0$ and $a^{2n}\not= 1$. If $f$ and $g$ are entire functions such that $E_{f}(S_{j})= E_{g}(S_{j})$ for $j = 1,2$, then $f\equiv g$.
\end{theoA}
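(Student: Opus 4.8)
The plan is to turn each set-sharing hypothesis into a multiplicative functional identity and then to squeeze those identities with Nevanlinna theory, the main tool being the Second Fundamental Theorem applied to $f$ and to $g$ with the $n$-th roots of unity (and the value $a$) as target values. Throughout, $f$ and $g$ are nonconstant, as in the surrounding discussion. Write $S_1=\{\omega_1,\dots,\omega_n\}$ for the set of $n$-th roots of unity and $P(w)=w^n-1=\prod_{j=1}^n(w-\omega_j)$. Since $f,g$ are entire and $E_f(S_1)=E_g(S_1)$, the entire functions $P(f)$ and $P(g)$ have exactly the same zeros with the same multiplicities, so $P(f)/P(g)$ is entire and zero-free; hence there is an entire $\alpha$ with
\[ f^n-1=(g^n-1)e^{\alpha}. \]
In the same way, $E_f(\{a\})=E_g(\{a\})$ yields an entire $\beta$ with $f-a=(g-a)e^{\beta}$.

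Next I would compare growth. Applying the Second Fundamental Theorem to $f$ with the $n$ distinct finite targets $\omega_1,\dots,\omega_n$, and using $N(r,f)\equiv0$,
\[ (n-1)T(r,f)\le\overline{N}\!\left(r,\tfrac{1}{f^n-1}\right)+S(r,f)=\overline{N}\!\left(r,\tfrac{1}{g^n-1}\right)+S(r,f)\le nT(r,g)+S(r,f), \]
and symmetrically with $f$ and $g$ interchanged; for $n\ge3$ this forces $T(r,f)\asymp T(r,g)$, so the error terms $S(r,f)$ and $S(r,g)$ may be merged into one symbol $S(r)$. Differentiating the logarithm of the first identity gives $\sum_j\frac{f'}{f-\omega_j}-\sum_j\frac{g'}{g-\omega_j}=\alpha'$; at each common zero of $f^n-1$ and $g^n-1$ the two sums have the same principal part (CM sharing), so all poles cancel, $\alpha'$ is entire, and the Lemma on the Logarithmic Derivative bounds its proximity function. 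Hence $T(r,\alpha')=S(r)$, and likewise $T(r,\beta')=S(r)$.

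The crux is to prove $f^n\equiv g^n$, and this is exactly where $n\ge5$ and $a^{2n}\ne1$ are needed. One first excludes $e^{\alpha}$ being nonconstant. The key remark is that $e^{\alpha}-1=\dfrac{f^n-g^n}{g^n-1}$ vanishes, with multiplicity at least that of the corresponding zero of $f-a$, at every common zero of $f-a$ and $g-a$; here $a^n\ne1$ is precisely what guarantees $g^n-1\ne0$ at such points, while $a\ne0$ keeps $f^{n-1}+\cdots+g^{n-1}$ nonzero there. This makes $\overline{N}\bigl(r,\tfrac{1}{e^{\alpha}-1}\bigr)$ large; feeding that, together with the count of the off-diagonal incidences (points where $f=\omega_i$ and $g=\omega_j$ with $i\ne j$, which are zeros of $f^{n-1}+f^{n-2}g+\cdots+g^{n-1}$), into the Second Fundamental Theorem for $f$ and for $g$ against the targets $\omega_1,\dots,\omega_n$ and $a$, and invoking $T(r,\alpha')=T(r,\beta')=S(r)$, one reaches an inequality $c_n\,T(r,f)\le S(r)$ with $c_n>0$ once $n\ge5$. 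Hence $\alpha'\equiv0$, i.e.\ $e^{\alpha}$ is a constant $c\ne0$ and $f^n=cg^n+(1-c)$. If $c\ne1$, pick $\delta,\eta$ with $\delta^n=1-c$ and $\eta^n=(c-1)/c$; then $\prod_j(f-\delta\omega_j)=cg^n$ and $\prod_j(g-\eta\omega_j)=c^{-1}f^n$, so every zero of each $f-\delta\omega_j$ is a zero of $g$ and conversely, and the Second Fundamental Theorem (again with no poles) gives $(n-1)T(r,f)\le T(r,g)+S(r)$ and $(n-1)T(r,g)\le T(r,f)+S(r)$, whence $T(r,f)\le S(r)$ --- impossible for $n\ge3$. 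Therefore $c=1$ and $f^n\equiv g^n$.

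Finally, from $f^n\equiv g^n$ and $f^n-g^n=\prod_{j=1}^n(f-\omega_j g)\equiv0$ we get, $M(\mathbb{C})$ being a field, $f=tg$ for a constant $t$ with $t^n=1$. Substituting into $f-a=(g-a)e^{\beta}$ gives $g(t-e^{\beta})=a(1-e^{\beta})$. If $t=1$ this reads $(1-e^{\beta})(g-a)\equiv0$, so $e^{\beta}\equiv1$ (as $g$ is nonconstant) and $f\equiv g$. If $t\ne1$, then $t-e^{\beta}\not\equiv0$ and $g=a+\dfrac{a(1-t)}{\,t-e^{\beta}\,}$; since $g$ is entire and $a(1-t)\ne0$, the function $t-e^{\beta}$ must be zero-free, so the nonconstant $e^{\beta}$ would omit both $0$ and $t\ne0$, contradicting Picard's theorem unless $e^{\beta}$ is constant --- but then $g$ is constant, a contradiction. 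Hence $t=1$ and $f\equiv g$. The main obstacle is the crux in the third paragraph: extracting from the two CM-sharing hypotheses exactly enough multiplicity bookkeeping that the Second Fundamental Theorem closes at $n=5$, and correctly using the restriction on $a$ (which keeps $a$ off the ramification values of $w\mapsto w^n$) to rule out a nonconstant $e^{\alpha}$; the remaining steps are essentially formal.
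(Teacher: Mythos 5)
The paper does not prove Theorem A --- it is quoted as background from Yi's 1994 paper --- so your argument has to stand on its own, and it does not: the crux step (showing that $e^{\alpha}$ is constant) is asserted rather than proved, and the sketch you give of it cannot be completed with the inputs you name. Your paragraph claims that feeding $\overline{N}\bigl(r,\tfrac{1}{e^{\alpha}-1}\bigr)\ge \overline{N}\bigl(r,\tfrac{1}{f-a}\bigr)$ and the off-diagonal incidences into the Second Fundamental Theorem yields $c_nT(r,f)\le S(r)$, and you state explicitly that the only arithmetic conditions used there are $a\ne 0$ and $a^{n}\ne 1$. But consider $f=e^{z}$ and $g=a^{2}e^{-z}$ with $a^{n}=-1$ (so $a\ne0$, $a^{n}\ne1$, but $a^{2n}=1$): then $g-a=-ae^{-z}(f-a)$ and $g^{n}-1=-e^{-nz}(f^{n}-1)$, so this pair shares both $S_{1}$ and $\{a\}$ CM, satisfies every fact your sketch invokes, and yet $e^{\alpha}=-e^{nz}$ is nonconstant and $f\not\equiv g$. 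Hence any argument using only $a\ne0$ and $a^{n}\ne1$ at this stage is doomed; the hypothesis $a^{n}\ne-1$ (the other half of $a^{2n}\ne1$) must enter precisely here, and it never does in your write-up. Relatedly, there is no mechanism in your sketch for turning ``$\overline{N}\bigl(r,\tfrac{1}{e^{\alpha}-1}\bigr)$ is large'' into a contradiction: the only available upper bound is $T(r,e^{\alpha})+O(1)$, and $T(r,e^{\alpha})$ can be as large as $nT(r,f)$ (as in the example above), while $T(r,\alpha')=S(r)$ controls only the multiple zeros of $e^{\alpha}-1$, not the simple ones.

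The standard repair is a genuinely different mechanism. Either apply Yi's CM-sharing lemma to $F=f^{n}$ and $G=g^{n}$ (which share $1$ CM) --- since $\overline{N}(r,0;F)+\overline{N}(r,0;G)+\overline{N}(r,\infty;F)+\overline{N}(r,\infty;G)\le\frac{2}{n}\bigl(T(r,F)+T(r,G)\bigr)$, the lemma gives $F\equiv G$ or $FG\equiv1$ for $n\ge5$ --- or apply Borel's theorem to the unit relation $f^{n}-e^{\alpha}g^{n}+e^{\alpha}=1$, whose nondegenerate case is excluded for $n\ge5$ and whose degenerate cases are exactly $e^{\alpha}$ constant or $f^{n}g^{n}\equiv\mathrm{const}$. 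In either version one is left with the extra branch $f^{n}g^{n}\equiv1$ (equivalently $fg\equiv d$, $d^{n}=1$), where $f$ and $g$ are zero-free and the zeros of $g-a$ are the zeros of $f-d/a$; the set-sharing of $\{a\}$ then forces either $a^{2}=d$ (hence $a^{2n}=1$, excluded) or $f$ omitting $0$, $a$ and $d/a$, killed by Picard. That branch is where $a^{n}\ne-1$ is consumed, and it is entirely missing from your proof. Your functional identities, the reduction $f^{n}\equiv g^{n}\Rightarrow f=tg$, and the Picard argument in the final paragraph are all correct; the gap is confined to, but fatal in, the third paragraph.
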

Later in 1998, the same author (\cite{2Yi98}) proved the following theorem:
\begin{theoB}
Let $S_{1}=\{0\}$ and $S_{2} = \{z~:~z^{2}(z+a) - b = 0\}$, where $a$ and $b$ are two non-zero constants such that the algebraic equation $z^{2}(z+a) - b=0$ has no multiple roots. If $f$ and $g$ are two entire functions satisfying $E_{f}(S_{j})= E_{g}(S_{j})$ for $j = 1,2$, then $f\equiv g$.
\end{theoB}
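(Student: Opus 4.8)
The plan is to convert the two set‑sharing hypotheses into exponential functional equations and then to eliminate the exponential factors, using Nevanlinna theory together with the very special shape of the cubic $P(z):=z^{3}+az^{2}-b$. We may assume $f$ and $g$ are non‑constant, the other cases being immediate, and we write $c_{1},c_{2},c_{3}$ for the three distinct non‑zero roots of $P$. Since $f,g$ are entire and share $\{0\}$ CM, they have exactly the same zeros with multiplicities, so $f=g\,e^{\alpha}$ for some entire $\alpha$; and since $P$ has only simple zeros, sharing $S_{2}$ CM forces $P(f)$ and $P(g)$ to have the same zero divisor, so $P(f)=e^{\beta}P(g)$ for some entire $\beta$. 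Substituting $f=g\,e^{\alpha}$ into $P(f)=e^{\beta}P(g)$ and rearranging yields the identity
$$g^{2}\bigl[(e^{3\alpha}-e^{\beta})g+a(e^{2\alpha}-e^{\beta})\bigr]=b\,(1-e^{\beta}).\qquad(\ast)$$
Everything reduces to the single claim $e^{\beta}\equiv1$. Granting it, $(\ast)$ becomes $g^{2}\bigl[(e^{3\alpha}-1)g+a(e^{2\alpha}-1)\bigr]=0$; as $g\not\equiv0$ (the case $g\equiv0$ forcing $f\equiv g\equiv0$), either $e^{\alpha}\equiv1$ and we are done, or, cancelling $e^{\alpha}-1$, $(e^{2\alpha}+e^{\alpha}+1)g=-a(e^{\alpha}+1)$. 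A non‑constant $e^{\alpha}$ omits only $0$, hence assumes a value $\omega$ with $\omega^{2}+\omega+1=0$, and at such a point the left side vanishes while the right side is $a\omega^{2}\neq0$ --- impossible for the entire function $g$; and $e^{\alpha}$ equal to a constant $\neq1$ likewise forces $g$ to be constant, against our assumption. Hence $e^{\alpha}\equiv1$, i.e.\ $f\equiv g$.

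To prove $e^{\beta}\equiv1$, I would first dispose of the degenerate algebraic case $e^{3\alpha}\equiv e^{\beta}$: if $e^{\alpha}\equiv1$ then $e^{\beta}=e^{3\alpha}\equiv1$, while otherwise $(\ast)$ collapses, after cancelling $1-e^{\alpha}$, to $g^{2}=\tfrac{b}{a}(1+e^{-\alpha}+e^{-2\alpha})$ --- which is impossible, because $1+e^{-\alpha}+e^{-2\alpha}$ acquires a \emph{simple} zero wherever $e^{-\alpha}$ meets a primitive cube root of unity off the isolated zero set of $\alpha'$, so it cannot be a constant multiple of a square of an entire function (and the sub‑case $\alpha$ constant reduces away). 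So assume henceforth $e^{\beta}\not\equiv1$ and $e^{3\alpha}\not\equiv e^{\beta}$, and look for a contradiction. Reading orders of vanishing in $(\ast)$: at a zero of $g$ of multiplicity $m$ the left side vanishes to order at least $2m$, hence $1-e^{\beta}$ vanishes there to order at least $2m$, hence $\beta'=(e^{\beta})'/e^{\beta}$ vanishes to order at least $m$. Since $f,g$ --- and hence $e^{\beta}=P(f)/P(g)$ --- are of finite order, $\beta'$ is a polynomial (its proximity function being $O(\log r)$ by the lemma on the logarithmic derivative), so $N(r,1/g)=N(r,1/f)=O(\log r)$ and $f,g$ have only finitely many zeros.

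The heart of the matter is now the contradiction. Write $f=Q\,e^{L}$ and $g=Q\,e^{M}$, where $Q$ is the polynomial carrying the common zeros and $L,M$ are polynomials with $\alpha=L-M$; substituting $g=Qe^{M}$ into $(\ast)$ turns it into
$$Q^{3}e^{3L}-Q^{3}e^{3M+\beta}+aQ^{2}e^{2L}-aQ^{2}e^{2M+\beta}+be^{\beta}-b\equiv0,$$
a linear combination of exponentials with polynomial coefficients, none of which vanishes identically (using $Q\not\equiv0$, $a,b\neq0$). By Borel's theorem on such sums, two of the exponents $3L,\ 3M+\beta,\ 2L,\ 2M+\beta,\ \beta,\ 0$ must differ by a constant; running through the possibilities, each forces either $f$ or $g$ to be constant (excluded), or a non‑trivial polynomial in $e^{\pm\alpha}$ to vanish identically (absurd, as $e^{\alpha}$ is transcendental), or directly $e^{\beta}\equiv1$ --- so $e^{\beta}\equiv1$, as required. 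I expect this last step --- in particular the bookkeeping of the Borel case analysis, and, in the general entire case, replacing ``finitely many zeros'' by the small‑function bound $N(r,1/g)=S(r,f)$ and invoking the corresponding lemma on exponential sums with small coefficients --- to be the principal obstacle. With $e^{\beta}\equiv1$ in hand, the first paragraph gives $f\equiv g$, and the theorem is proved.
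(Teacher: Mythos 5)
The paper does not actually prove this statement: Theorem~B is quoted as background from Yi's 1998 paper \cite{2Yi98}, so there is no in-paper proof to compare against, and your proposal has to be judged on its own terms. Your reductions are the classical ones and are correct as far as they go: CM-sharing of $\{0\}$ gives $f=ge^{\alpha}$, simplicity of the roots of $P(z)=z^{2}(z+a)-b$ gives $P(f)=e^{\beta}P(g)$, the identity $(\ast)$ follows by substitution, and the deduction of $f\equiv g$ from $e^{\beta}\equiv1$ (Picard applied to $e^{\alpha}$ against the factor $e^{2\alpha}+e^{\alpha}+1$) is complete.

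The genuine gap is that the two steps you yourself flag as ``the principal obstacle'' are exactly where the theorem lives, and neither is carried out. First, the statement carries no growth hypothesis, while your central chain --- $\beta'$ is a polynomial, $g$ has finitely many zeros, $f=Qe^{L}$ and $g=Qe^{M}$ with polynomial data, classical Borel for exponential polynomials --- is confined to finite order. Passing to arbitrary entire $f,g$ is not a cosmetic replacement of $O(\log r)$ by $S(r,f)$: once $Q$ is merely an entire function with $N(r,0;Q)=S(r,f)$ you cannot invoke Borel's theorem for exponential polynomials, and you need the Borel--Nevanlinna result on identities $\sum a_{j}e^{g_{j}}\equiv0$ with small coefficients (Yang--Yi, Theorem 1.52, say) together with a verification that all six coefficients are indeed small relative to the relevant exponential differences; none of that is set up. Second, even in finite order the case analysis over the exponents $3L$, $3M+\beta$, $2L$, $2M+\beta$, $\beta$, $0$ is asserted rather than performed, and not every case dissolves into ``a polynomial in $e^{\alpha}$ vanishes'': for instance the class $\{0,3L,2L\}$ arising when $L$ is constant forces $P(f)\equiv0$ and must be excluded by non-constancy of $f$, while the configuration with $\beta$ constant and $L,M$ non-constant is the one that actually delivers $e^{\beta}\equiv1$. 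Finally, ``the other cases being immediate'' is not quite right: for constant functions the statement is false (take $f\equiv1$, $g\equiv2$ with $a,b$ chosen so that neither value is a root of $P$ --- all four sets $E_{f}(S_{j}),E_{g}(S_{j})$ are empty), so the theorem must be read, as in Yi's original, for non-constant $f$ and $g$. In short: the skeleton is the right one, but the decisive computations are deferred rather than done.
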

In the same paper (\cite{2Yi98}), the author proved the following theorem also:
\begin{theoC}
If $S_{1}$ and $S_{2}$ are two sets of  finite distinct complex numbers such that any two entire functions $f$ and $g$ satisfying $E_{f}(S_{j})= E_{g}(S_{j})$ for $j = 1,2$, must be identical, then $\max\{\sharp(S_1),\sharp(S_2)\} \geq 3$, where $\sharp(S)$ denotes the cardinality of the set $S$.
\end{theoC}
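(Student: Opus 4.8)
The plan is to argue by contraposition. Assuming $\sharp(S_1)\le 2$ and $\sharp(S_2)\le 2$, I will produce two \emph{distinct} non-constant entire functions $f,g$ with $E_f(S_j)=E_g(S_j)$ for $j=1,2$; this shows that $(S_1,S_2)$ does not have the uniqueness property, which is the contrapositive of the theorem. Throughout I take $S_1$ and $S_2$ to be disjoint, as is customary in this circle of questions and as happens in Theorems A and B above: if one of the two sets is a singleton, the multiset identity $E_h(U\cup\{x\})=E_h(U)\sqcup E_h(\{x\})$ for $x\notin U$ makes the pair $\{S_1,S_2\}$ ``sharing-equivalent'' to a disjoint pair (or to a single set), so nothing is lost.

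The engine is a transfer principle under M\"obius substitution. Let $\psi$ be a M\"obius transformation of $\widehat{\mathbb C}$ and let $f$ be a non-constant entire function omitting the value $\psi^{-1}(\infty)$ (a vacuous requirement when $\psi(\infty)=\infty$). Then $g:=\psi\circ f$ is again non-constant and entire, and for every finite set $T\subset\mathbb C$ one has $E_g(T)=E_f\!\left(\psi^{-1}(T)\right)$ \emph{as multisets}: at a point $z_0$ with $f(z_0)=s\in\psi^{-1}(T)$ one has $s\ne\psi^{-1}(\infty)$, so $\psi$ is locally biholomorphic at $s$ and $\operatorname{ord}_{z_0}\bigl(g-\psi(s)\bigr)=\operatorname{ord}_{z_0}(f-s)$. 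Consequently, if $\psi\ne\mathrm{id}$ is chosen so that $\psi(S_1)=S_1$ and $\psi(S_2)=S_2$ as sets, then $\psi^{-1}(S_j)=S_j$ and hence $E_g(S_j)=E_f(S_j)$ for $j=1,2$; moreover $f\not\equiv g$, since $f\equiv\psi\circ f$ would confine the values of $f$ to the set of (at most two) fixed points of $\psi$, contradicting non-constancy. Since $\psi$ carries $S_1\cup S_2$ into $\mathbb C$, the point $\psi^{-1}(\infty)$ lies outside $S_1\cup S_2$, so the choice $f(z)=\psi^{-1}(\infty)+e^{z}$ (or $f(z)=e^{z}+t$ with $t\notin S_1\cup S_2$ when $\psi$ is affine) is legitimate and even makes $f$ attain every element of $S_1\cup S_2$, so the sharing is non-vacuous. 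Thus the whole matter reduces to this: for disjoint $S_1,S_2$ with $\sharp(S_j)\le 2$, find a non-identity M\"obius map stabilising both sets.

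This is elementary after normalisation. If $\sharp(S_1)=2$, say $S_1=\{\alpha_1,\alpha_2\}$, work in the coordinate $v=\dfrac{w-\alpha_1}{w-\alpha_2}$, in which the maps $v\mapsto c/v$ interchange $\alpha_1$ and $\alpha_2$; the choice $c=v(\beta_1)\,v(\beta_2)$ (when $S_2=\{\beta_1,\beta_2\}$) or $c=v(\beta)^2$ (when $S_2=\{\beta\}$) makes such a map also interchange $\beta_1,\beta_2$, respectively fix $\beta$, and disjointness guarantees $c\in\mathbb C\setminus\{0\}$. If both sets are singletons, $S_1=\{\alpha\}$ and $S_2=\{\beta\}$, take the transformation that is $v\mapsto 2v$ in the coordinate $v=\dfrac{w-\alpha}{w-\beta}$, which fixes both $\alpha$ and $\beta$; and if $S_1=S_2$ an affine reflection stabilising $S_1$ suffices. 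In every case $\psi\ne\mathrm{id}$, and $f$ together with $g=\psi\circ f$ is the desired pair. The two places I expect genuine work are: (i) the $(2,2)$ disjoint case — writing down the correct $\psi$ and checking that the transfer principle is exact \emph{with multiplicities}, not merely up to ignoring them; and (ii) the configuration $\sharp(S_1)=\sharp(S_2)=2$ with $\sharp(S_1\cap S_2)=1$, which cannot be treated this way (no non-identity M\"obius fixes three points) and for which the bound $\max\{\sharp(S_1),\sharp(S_2)\}\ge 3$ in fact requires the standing hypothesis $S_1\cap S_2=\emptyset$: matching multiplicities shows that any two entire functions sharing such $S_1,S_2$ must share all three points of $S_1\cup S_2$, whence $f\equiv g$ unless those three points form an arithmetic progression (Nevanlinna's four-value theorem), so without disjointness the bound can genuinely fail.
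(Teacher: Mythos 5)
The paper itself contains no proof of Theorem C: it is quoted verbatim from Yi's 1998 paper \cite{2Yi98}, so there is no internal argument to compare yours against, and your proposal has to be judged on its own. On its own terms it is essentially a correct and self-contained proof of the contrapositive whenever the elements of $S_1\cup S_2$ are pairwise distinct (together with the degenerate configurations $S_1=S_2$ and $\{a\}\subset\{a,b\}$, which you correctly reduce to the disjoint case by cancelling the common multiset summand). The Möbius transfer principle $E_{\psi\circ f}(T)=E_f\bigl(\psi^{-1}(T)\bigr)$ is verified with multiplicities, the normalised maps $v\mapsto c/v$ and $v\mapsto 2v$ do produce a non-identity $\psi$ stabilising both sets in each of the $(2,2)$, $(2,1)$ and $(1,1)$ disjoint configurations with $c\neq 0$ guaranteed by disjointness, and the checks that $g=\psi\circ f$ is entire (because $f$ omits $\psi^{-1}(\infty)\notin S_1\cup S_2$), non-constant, and distinct from $f$ (a non-constant entire function cannot take values only in the at most two fixed points of $\psi$) are all sound.

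The one item to press you on is the configuration $S_1=\{a,b\}$, $S_2=\{a,c\}$ with $a,b,c$ distinct, which you flag but do not dispose of. Your diagnosis there is correct and in fact shows that the statement \emph{as printed} cannot be proved: multiset sharing of these two sets forces $f$ and $g$ to share $a$, $b$, $c$ (and, being entire, $\infty$) CM, so by the four-value theorem $f\equiv g$ unless one of the three points is the midpoint of the other two and is omitted by both functions; hence for a generic triple the pair $(S_1,S_2)$ has the uniqueness property while $\max\{\sharp(S_1),\sharp(S_2)\}=2$. So either Theorem C carries the standing convention of this circle of results (visible in Theorems A, B, D, H, I of the paper) that all elements of $S_1\cup S_2$ are distinct --- in which case your proof is complete --- or the transcribed statement is false and your case (ii) is a counterexample rather than a gap. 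You should say explicitly which version you are proving; as written, the proposal proves the disjoint version and leaves the literal version neither proved nor refuted, even though all the ingredients for the refutation are already in your last sentence.
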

Thus for the uniqueness of two entire functions when they share two sets, it is clear that the smallest cardinalities of $S_{1}$ and $S_{2}$ are $1$ and $3$ respectively.\par
But for the uniqueness of two meromorphic functions when they share two sets, H. X. Yi (\cite{2Yi}) also answered the question of Gross as follows:
\begin{theoD}
If $S_{1}=\{a+b, a+b\omega,\ldots,a+b\omega^{n-1}\}$ and $S_{2}=\{c_{1},c_{2}\}$ where $\omega=e^{\frac{2\pi i}{n}}$ and $b\not=0$, $c_{1}\not=a$, $c_{2}\not=a$, $(c_{1}-a)^{n}\not=(c_{2}-a)^{n}$, $(c_{k}-a)^{n}(c_{j}-a)^{n}\not=b^{2n}$ $(k,j=1,2)$ are constants. If two non-constant meromorphic functions $f$ and $g$ share $S_{1}$ CM, $S_2$ CM and if $n\geq9$, then $f\equiv g$.
\end{theoD}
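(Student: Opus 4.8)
The plan is to recode the two set-sharing hypotheses as functional relations between $f$ and $g$ through an auxiliary pair of functions attached to $S_{1}$, and then to eliminate every alternative to $f\equiv g$ by combining the arithmetic constraints on $a,b,c_{1},c_{2}$ with Picard's theorem. Concretely, set $F=\bigl(\tfrac{f-a}{b}\bigr)^{n}$ and $G=\bigl(\tfrac{g-a}{b}\bigr)^{n}$. Since $S_{1}$ is exactly the zero set of $(w-a)^{n}-b^{n}$, the hypothesis $E_{f}(S_{1})=E_{g}(S_{1})$ is equivalent to $F$ and $G$ sharing the value $1$ CM. Here $T(r,F)=nT(r,f)+O(1)$, while every zero of $F$ (a point where $f=a$) and every pole of $F$ has multiplicity at least $n\geq2$, and likewise for $G$; so, writing $N_{2}$ for the counting function with multiplicities truncated at $2$, one gets $N_{2}(r,0;F)\leq 2\overline{N}(r,a;f)$, $N_{2}(r,\infty;F)\leq 2\overline{N}(r,\infty;f)$, and similarly for $G$.

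Next I would invoke the standard lemma on two meromorphic functions sharing a value CM: either $F\equiv G$, or $F\cdot G\equiv1$, or
\[
T(r,F)\leq N_{2}(r,0;F)+N_{2}(r,0;G)+N_{2}(r,\infty;F)+N_{2}(r,\infty;G)+S(r,f)+S(r,g),
\]
together with the inequality obtained by interchanging $F$ and $G$. In the third alternative, the bounds of the previous paragraph combined with $\overline{N}(r,a;f)\leq T(r,f)+O(1)$ and $\overline{N}(r,\infty;f)\leq T(r,f)+O(1)$ give $nT(r,f)\leq 4T(r,f)+4T(r,g)+S(r,f)+S(r,g)$; adding this to its symmetric partner yields $(n-8)\bigl(T(r,f)+T(r,g)\bigr)\leq S(r,f)+S(r,g)$, impossible when $n\geq9$. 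Hence $F\equiv G$ or $FG\equiv1$, and it remains to rule out the second case and to deduce $f\equiv g$ in the first, in both instances using the sharing of $S_{2}=\{c_{1},c_{2}\}$.

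Suppose $FG\equiv1$. Then $\bigl((f-a)(g-a)\bigr)^{n}=b^{2n}$, so $(f-a)(g-a)=c$ for a constant $c\neq0$ with $c^{n}=b^{2n}$, whence $g=a+\tfrac{c}{f-a}$. Since this is a M\"obius (indeed involutive) relation, $g$ attains $c_{k}$ precisely where $f$ attains $e_{k}:=a+\tfrac{c}{c_{k}-a}$, with equal multiplicity, so $E_{f}(\{e_{1},e_{2}\})=E_{g}(\{c_{1},c_{2}\})=E_{f}(\{c_{1},c_{2}\})$. If $\{e_{1},e_{2}\}\cap\{c_{1},c_{2}\}=\emptyset$, no point can lie in $E_{f}(\{c_{1},c_{2}\})$, since its $f$-value would have to belong to both disjoint sets; thus $f$ omits the four distinct finite values $c_{1},c_{2},e_{1},e_{2}$, contradicting Picard's theorem. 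And any coincidence $e_{i}=c_{j}$ forces $c=(c_{k}-a)(c_{\ell}-a)$ for suitable $k,\ell\in\{1,2\}$ (with $k=\ell$ allowed), hence $(c_{k}-a)^{n}(c_{\ell}-a)^{n}=c^{n}=b^{2n}$, contrary to hypothesis. So $FG\equiv1$ is impossible.

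Finally suppose $F\equiv G$. Then $(f-a)^{n}=(g-a)^{n}$, so $\tfrac{f-a}{g-a}$ is a constant $n$-th root of unity $\eta$, and we must show $\eta=1$. If $\eta\neq1$, then $f-c_{k}=\eta\bigl(g-d_{k}\bigr)$ with $d_{k}:=a+\tfrac{c_{k}-a}{\eta}$, so $E_{g}(\{d_{1},d_{2}\})=E_{f}(\{c_{1},c_{2}\})=E_{g}(\{c_{1},c_{2}\})$; the disjoint case again contradicts Picard, while a coincidence $d_{i}=c_{j}$ forces $\eta=1$ when $i=j$ (since $c_{k}\neq a$) and $(c_{1}-a)^{n}=(c_{2}-a)^{n}$ when $i\neq j$ (since $\eta^{n}=1$), both excluded. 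Hence $\eta=1$ and $f\equiv g$. I expect this closing case analysis to be the main obstacle: one must verify that every residual M\"obius / root-of-unity relation compatible with the two set-sharings is self-contradictory, and it is precisely here that the hypotheses $(c_{1}-a)^{n}\neq(c_{2}-a)^{n}$ and $(c_{k}-a)^{n}(c_{j}-a)^{n}\neq b^{2n}$, together with the non-existence of a non-constant meromorphic function omitting four finite values, are consumed.
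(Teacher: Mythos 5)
Theorem D is quoted in the paper from Yi's 1994 article and is not proved there, so there is no in-paper argument to compare against; judged on its own, your proposal is correct and follows the classical route for this result. The reduction of $S_{1}$-sharing to $F=\bigl(\tfrac{f-a}{b}\bigr)^{n}$ and $G=\bigl(\tfrac{g-a}{b}\bigr)^{n}$ sharing $1$ CM, the use of the standard trichotomy ($F\equiv G$, $FG\equiv1$, or the $N_{2}$-bound, which your count $n\le 8$ correctly excludes for $n\ge 9$), and the elimination of the residual relations $(f-a)(g-a)\equiv c$ and $f-a\equiv\eta(g-a)$ via Picard's theorem together with the hypotheses $(c_{k}-a)^{n}(c_{j}-a)^{n}\neq b^{2n}$ and $(c_{1}-a)^{n}\neq(c_{2}-a)^{n}$ are all sound and complete.
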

In this direction, in 2012, B. Yi and Y. H. Li (\cite{2YL}) gave the following theorem:
\begin{theoE}
If $S_{1}=\{z~:~\frac{(n-1)(n-2)}{2}z^{n}-n(n-2)z^{n-1}+\frac{n(n-1)}{2}z^{n-2}+1=0\}$ where $n\geq5$ is an integer and $S_{2}=\{0,1\}$. If two non-constant meromorphic functions $f$ and $g$ share $S_{1}$ CM, $S_2$ CM, then $f\equiv g$.
\end{theoE}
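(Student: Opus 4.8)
The plan is to exploit the very special shape of the polynomial
$$P(z)=\frac{(n-1)(n-2)}{2}z^{n}-n(n-2)z^{n-1}+\frac{n(n-1)}{2}z^{n-2}+1 .$$
Differentiating, one finds $P'(z)=\dfrac{n(n-1)(n-2)}{2}\,z^{n-3}(z-1)^{2}$, so $0$ and $1$ are the only critical points of $P$, with $P(0)=1$ and $P(1)=2$, and one obtains the factorizations
$$P(z)-1=\frac{(n-1)(n-2)}{2}\,z^{n-2}(z-\beta_{1})(z-\beta_{2}),\qquad P(z)-2=\frac{(n-1)(n-2)}{2}\,(z-1)^{3}R(z),$$
where a short computation gives $\beta_{1}\neq\beta_{2}$, $\beta_{1}\beta_{2}\neq0$, and $R$ a polynomial of degree $n-3$ with $R(1)\neq0$. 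The crucial structural fact is that $S_{2}=\{0,1\}$ is precisely the critical set of $P$; this is what makes the two hypotheses cooperate. I would first record the identities forced by the hypotheses: since $S_{1}=P^{-1}(0)$ and $f,g$ share $S_{1}$ CM, the functions $P(f)$ and $P(g)$ have the same zeros with the same multiplicities; since $f,g$ share $\{0,1\}$ CM, $f(f-1)$ and $g(g-1)$ have the same zeros with the same multiplicities. Using $T(r,P(f))=nT(r,f)+O(1)$ together with the Second Main Theorem one gets $T(r,f)=T(r,g)+S(r)$, and then — after checking that the non-common poles of $f,g$ contribute only to $S(r)$, the single point where the general meromorphic (rather than entire) setting needs care — one may write $P(f)/P(g)=e^{\gamma}$ and $f(f-1)/\bigl(g(g-1)\bigr)=e^{\delta}$ for entire functions $\gamma,\delta$.

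The heart of the proof is to show $e^{\gamma}\equiv1$, i.e. $P(f)\equiv P(g)$. Writing $P(f)-1=e^{\gamma}(P(g)-1)+(e^{\gamma}-1)$ and $P(f)-2=e^{\gamma}(P(g)-2)+2(e^{\gamma}-1)$ and substituting the factorizations of $P-1$ and $P-2$ for both $f$ and $g$, the zeros of $f$ (counted with weight $\ge n-2$ in $P(f)-1$) and the points where $f=1$ (counted with weight $\ge3$ in $P(f)-2$) are forced, away from the zeros of $e^{\gamma}-1$, to lie over the zeros of $g$, $g-1$, $g-\beta_{i}$ or $R(g)$. A careful accounting of these multiplicities through the Second Main Theorem for $f$ with targets $0,1,\beta_{1},\beta_{2},\infty$ — where the hypothesis $n\ge5$ leaves just enough room — yields $T(r,e^{\gamma})=S(r)$, so $e^{\gamma}$ is a nonzero constant $c$. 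One then has to discard $c\neq1$ and the companion exceptional relations (e.g. $P(f)P(g)$ constant). The generic sub-case $P(f)\equiv cP(g)$ with $c\notin\{0,1,2,\tfrac12\}$ is eliminated thus: from $P(f)-1=c\bigl(P(g)-\tfrac1c\bigr)$, the zeros of $f,f-\beta_{1},f-\beta_{2}$ form the full $g$-preimage of the $n$ simple roots of $P(z)=\tfrac1c$, and combining this with the Second Main Theorem gives $(n-5)T(r,g)\le S(r)$, a contradiction for $n\ge6$. The boundary case $n=5$, together with the remaining special values of $c$, must be treated by a finer argument that also uses the second family of auxiliary values (the preimages of $P(z)=\tfrac2c$), the shared poles of $f,g$, and possibly the derivative identity $P'(f)f'\equiv cP'(g)g'$, i.e. $f^{n-3}(f-1)^{2}f'\equiv c\,g^{n-3}(g-1)^{2}g'$. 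I expect this step — the control of $e^{\gamma}$ and especially the elimination of $c\neq1$ and the case $n=5$ — to be the main obstacle; it is exactly here that $n\ge5$ and the CM (not merely IM) sharing of $S_{2}$ are indispensable.

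Once $P(f)\equiv P(g)$, it remains to upgrade this to $f\equiv g$. The only multiple values of $P$ lie over its critical points $0,1$, so $P(f)\equiv P(g)$ together with the CM sharing of the critical set $S_{2}=\{0,1\}$ and the relation $f(f-1)\equiv e^{\delta}g(g-1)$ forces, assuming $f\not\equiv g$, either a M\"obius relation between $f$ and $g$ incompatible with the shape of $P$ or a counting contradiction. Hence $P$ acts as a uniqueness polynomial for this pair and $f\equiv g$, completing the proof.
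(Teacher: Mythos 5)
First, a point of reference: the paper does not prove Theorem E at all --- it is quoted verbatim from B. Yi and Y. H. Li \cite{2YL} as background motivation, so there is no in-paper proof to compare your argument against. Your preparatory algebra is correct and is indeed the natural starting point: $P'(z)=\frac{n(n-1)(n-2)}{2}z^{n-3}(z-1)^{2}$, $P(0)=1$, $P(1)=2$, the factorizations of $P-1$ and $P-2$, and the observation (which the authors themselves make just before their Theorem 2.3) that $S_{2}=\{0,1\}$ is exactly the critical set of $P$.

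As a proof, however, the proposal has genuine gaps, several of which you flag yourself. (i) For arbitrary meromorphic $f,g$ you cannot write $P(f)/P(g)=e^{\gamma}$ with $\gamma$ entire: a pole of $f$ that is not a pole of $g$ (or has a different multiplicity) produces an honest zero or pole of the quotient, and the remark that such points ``contribute only to $S(r)$'' does not remove them; you must either first prove that $f$ and $g$ share $\infty$ CM, or work with an auxiliary function such as $H=\frac{F''}{F'}-\frac{G''}{G'}$ as this paper does for its own theorems. (ii) The central analytic claims --- that the multiplicity bookkeeping through the Second Main Theorem yields $T(r,e^{\gamma})=S(r)$, that $c\neq 1$ can be excluded, and that the case $n=5$ survives the count --- are asserted rather than proved; you explicitly defer them to ``a finer argument,'' and your own estimate $(n-5)T(r,g)\leq S(r)$ says nothing when $n=5$. (iii) The final passage from $P(f)\equiv P(g)$ to $f\equiv g$ is the unique range set problem in miniature and cannot be settled by ``either a M\"obius relation or a counting contradiction''; the workable route is to observe that $P(f)\equiv P(g)$ together with CM sharing of $\{0,1\}$ forces $f$ and $g$ to share $0$, $1$ and $\infty$ CM \emph{separately} (using $(1-\beta_{1})(1-\beta_{2})\neq 0$ and $P(0)\neq P(1)$), and then to run the classical three-shared-values analysis against the identity $f^{n-2}(f-\beta_{1})(f-\beta_{2})\equiv g^{n-2}(g-\beta_{1})(g-\beta_{2})$ --- none of which appears in the proposal. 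In short, the plan is sensible and the structural computations are right, but what you have is an outline whose hardest steps are left open, not a proof.
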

But recently, J. F. Chen improved Theorems D and E for a particular class of meromorphic functions as:
\begin{theoF} (\cite{2C})
Let $S_{1}=\{\alpha\}$ and $S_{2}=\{\beta_{1}, \beta_{2}\}$, where $\alpha$, $\beta_{1}$, $\beta_{2}$ are distinct finite complex numbers satisfying
$$(\beta_1-\alpha)^{2}\not=(\beta_2-\alpha)^{2}.$$
If two non-constant meromorphic functions $f$ and $g$ in $M_{1}(\mathbb{C})$ share $S_{1}$ CM, $S_2$ IM, and if the order of $f$ is neither an integer nor infinite, then $f\equiv g$.
\end{theoF}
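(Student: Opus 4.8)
The plan is to push everything through the degree-two rational map $R(z)=\dfrac{(z-\beta_1)(z-\beta_2)}{(z-\alpha)^{2}}$, and to spend the non-integrality of the order only at the very end. After translating we may take $\alpha=0$ and write $a=\beta_1-\alpha$, $b=\beta_2-\alpha$, so $ab\ne0$, $a\ne b$, $a^{2}\ne b^{2}$. First I would fix the orders. Since $f,g\in M_1(\mathbb C)$ we have $\ol N(r,\infty;f)=O(\log r)$, and the Second Fundamental Theorem for $f$ with the values $\alpha,\beta_1,\beta_2,\infty$, together with $\ol N(r,\alpha;f)=\ol N(r,\alpha;g)$ (from CM-sharing of $S_1$) and $\ol N(r,\beta_1;f)+\ol N(r,\beta_2;f)=\ol N(r,\beta_1;g)+\ol N(r,\beta_2;g)$ (from $\ol E_f(S_2)=\ol E_g(S_2)$), bounds $T(r,f)$ and $T(r,g)$ against one another; hence $\rho(g)=\rho(f)=:\rho$, finite and not an integer. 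Because $\rho\notin\mathbb Z$, Hadamard factorisation forces the zeros of each of $f-\alpha$, $f-\beta_1$, $f-\beta_2$ (and the same for $g$) to have exponent of convergence exactly $\rho$; in particular these level sets are infinite and the polynomial in any attached canonical product has degree $\le\rho$.

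Next I would pass to $\mathcal F=R\circ f=\dfrac{(f-\beta_1)(f-\beta_2)}{(f-\alpha)^{2}}$ and $\mathcal G=R\circ g$. The poles of $\mathcal F$ are precisely the zeros of $f-\alpha$ with multiplicities doubled, so by CM-sharing of $S_1$ the poles of $\mathcal F$ and $\mathcal G$ coincide, with multiplicity: $\mathcal F$ and $\mathcal G$ share $\infty$ CM. Their zero sets are both $\ol E_f(S_2)=\ol E_g(S_2)$, so they share $0$ IM. I would also record, from CM-sharing of $\{\alpha\}$ and $f,g\in M_1(\mathbb C)$, that $\dfrac{f-\alpha}{g-\alpha}=R_0e^{\gamma}$ with $R_0$ rational and $\gamma$ entire.

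The first serious task is to show $\mathcal F\equiv\mathcal G$. For this I would run a Lahiri/Banerjee-type argument: build an auxiliary function out of $\mathcal F$, $\mathcal G$ and their first two logarithmic derivatives, tuned to the shared pair $\{0,\infty\}$; by the logarithmic-derivative lemma its proximity function is $S(r,f)$, while the common poles over $E_f(S_1)$ cancel and $\ol N(r,\infty;f)=O(\log r)$, so its counting function is small, and a standard estimate forces it to vanish identically. Integrating then confines $\mathcal F$ and $\mathcal G$ to a short list of possible relations; the alternative $\mathcal F\mathcal G\equiv1$ is ruled out at once, since the poles of $\mathcal F,\mathcal G$ lie over $E_f(S_1)$ while their zeros lie over $\ol E_f(S_2)$ — disjoint sets — so $\mathcal F\mathcal G$ cannot be a nonzero constant. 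Hence $\mathcal F\equiv\mathcal G$, i.e.\ $R\circ f\equiv R\circ g$.

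Finally, $R$ is two-to-one with covering involution $\iota$ determined by $\dfrac{1}{\iota(z)-\alpha}+\dfrac{1}{z-\alpha}=\dfrac{1}{\beta_1-\alpha}+\dfrac{1}{\beta_2-\alpha}$, so $R\circ f\equiv R\circ g$ gives either $f\equiv g$ (and we are done) or
\[
\frac{1}{f-\alpha}+\frac{1}{g-\alpha}=\frac{1}{\beta_1-\alpha}+\frac{1}{\beta_2-\alpha}.
\]
Eliminating this last relation is the principal obstacle, and the one place where finiteness and non-integrality of $\rho(f)$ are genuinely used. Writing $\delta=\alpha+\dfrac{(\beta_1-\alpha)(\beta_2-\alpha)}{(\beta_1-\alpha)+(\beta_2-\alpha)}$ (well defined because $a^{2}\ne b^{2}$ forces $\beta_1+\beta_2\ne2\alpha$), the relation rewrites as $g-\alpha=\dfrac{(\beta_1-\alpha)(\beta_2-\alpha)(f-\alpha)}{(\beta_1+\beta_2-2\alpha)(f-\delta)}$ and gives $(f-\delta)(g-\delta)\equiv\mathrm{const}\ne0$; since $g\in M_1(\mathbb C)$, $f$ assumes $\delta$ only finitely often, hence $f-\delta$ has finitely many zeros and poles, so $f-\delta=R_1e^{\eta}$ and $g-\delta=R_2e^{-\eta}$ with $R_1,R_2$ rational and $\eta$ entire. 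The crux is then to deduce, from these representations together with $f,g\in M_1(\mathbb C)$ and $\rho(f)=\rho(g)<\infty$, that $\eta$ is a polynomial, so that $\rho(f)=\deg\eta\in\mathbb Z$ — contradicting $\rho(f)\notin\mathbb Z$. By comparison, the other degenerate configurations that may arise in the previous step are easy to kill: each of them makes $f$ omit two finite values, so some $1/(f-v)$ is an \emph{entire} function with only finitely many zeros, hence by Hadamard a polynomial times $e^{(\mathrm{polynomial})}$ and of integral order, again impossible. This leaves $f\equiv g$ as the only surviving possibility.
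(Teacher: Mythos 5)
The paper does not actually prove Theorem F: it is quoted from Chen \cite{2C}, and the paper's own machinery (Theorems \ref{thb1.1}--\ref{thb1.4}, built on $H=F''/F'-G''/G'$ and the second fundamental theorem) needs $n>\max\{2k+2,2\Gamma_{1}\}$ and therefore says nothing about the case $n=1$ of Theorem F. So your argument has to stand on its own. Its frame --- push $f,g$ through the degree-two map $R(z)=(z-\beta_1)(z-\beta_2)/(z-\alpha)^{2}$, reduce to $R\circ f\equiv R\circ g$, and kill the deck-involution branch $(f-\delta)(g-\delta)\equiv\mathrm{const}$ via Hadamard factorisation and the non-integrality of $\rho$ --- is reasonable, and the endgame is correct: $f-\delta$ then has finitely many zeros and poles, so $f-\delta=R_{1}e^{\eta}$ with $\eta$ a polynomial (finite order), forcing $\rho=\deg\eta\in\mathbb{Z}$, a contradiction. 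Your opening reductions (equality of orders, exponent of convergence of every finite level set equal to $\rho$) are also fine.

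The genuine gap is the middle step, which is the heart of the theorem. You assert that because $\mathcal{F}=R\circ f$ and $\mathcal{G}=R\circ g$ share $0$ IM and $\infty$ CM, an unspecified ``Lahiri/Banerjee-type'' auxiliary function must vanish and hence $\mathcal{F}\equiv\mathcal{G}$ after discarding a short list of alternatives. Sharing two values, one of them only IM, forces no algebraic relation between two meromorphic functions and no standard lemma applies: for instance $\mathcal{F}=P_{1}e^{h_{1}}/P_{2}$ and $\mathcal{G}=P_{1}^{2}e^{h_{2}}/P_{2}$ (canonical products $P_{1},P_{2}$, entire $h_{1},h_{2}$) share $0$ IM and $\infty$ CM and are unrelated. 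Concretely, whatever combination of logarithmic derivatives you form, the common zeros of $\mathcal{F}$ and $\mathcal{G}$ --- which lie over $\ol{E}_{f}(S_{2})$ and may carry different multiplicities since that sharing is only IM --- contribute poles to it, and $\ol{N}(r,0;\mathcal{F})$ is comparable to $T(r,f)$, not to $O(\ln r)$; so the claim that ``its counting function is small'' is unsupported. (This is exactly why the analogous argument in this paper needs $n$ large: the second fundamental theorem must absorb those terms.) The ``short list of relations'' after integration is likewise the full Möbius family $1/\mathcal{F}=c_{0}/\mathcal{G}+c_{1}$, and you only dispose of $\mathcal{F}\mathcal{G}\equiv1$ and some unspecified degenerate cases; eliminating $c_{1}\neq0$ and $c_{0}\neq1$ needs its own value-distribution argument (compare Subcase I of the proof of Theorem \ref{thb1.1}, which again uses $n>2\Gamma_{1}$). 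A correct proof at $n=1$ has to exploit structure you never invoke at this point --- most naturally, that CM sharing of the single value $\alpha$ together with finitely many poles and finite order gives $(f-\alpha)/(g-\alpha)=R_{0}e^{Q_{0}}$ with $Q_{0}$ a polynomial --- so as written the argument does not go through.
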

\begin{theoG} (\cite{2C})
Let $S_{1}=\{\alpha\}$ and $S_{2}=\{\beta_{1}, \beta_{2}\}$, where $\alpha$, $\beta_{1}$, $\beta_{2}$ are distinct finite complex numbers satisfying
$$(\beta_1-\alpha)^{2}\not=(\beta_2-\alpha)^{2}.$$
If two non-constant meromorphic functions $f$ and $g$ in $M_{1}(\mathbb{C})$ share $S_{1}$ IM, $S_2$ CM, and if the order of $f$ is neither an integer nor infinite, then $f\equiv g$.
\end{theoG}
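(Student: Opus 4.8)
The plan is to reduce to a normal form, extract an exponential factor from the $S_2$-sharing, use Hadamard factorization — where the hypothesis that $\rho(f)$ is non-integral does its work — to force that factor to be trivial, and then finish by an elementary algebraic manipulation. After the translation $z\mapsto z-\alpha$ I may assume $\alpha=0$, so that $S_1=\{0\}$, $S_2=\{\beta_1,\beta_2\}$ with $\beta_1,\beta_2\neq 0$, and the hypothesis $(\beta_1-\alpha)^2\neq(\beta_2-\alpha)^2$ becomes $\beta_1^2\neq\beta_2^2$, hence $\beta_1+\beta_2\neq 0$. Write $P(w)=(w-\beta_1)(w-\beta_2)$. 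First I would record that $f$ and $g$ have the same order: the $S_1$-IM and $S_2$-CM hypotheses give $\overline N(r,1/f)=\overline N(r,1/g)$ and $N(r,1/P(f))=N(r,1/P(g))$, while $\overline N(r,f)+\overline N(r,g)=O(\log r)$ since $f,g\in M_1(\mathbb C)$; feeding these into the Second Fundamental Theorem for $g$ at $0,\beta_1,\beta_2,\infty$ gives $T(r,g)=O(T(r,f))$, and by the symmetry of the hypotheses also $T(r,f)=O(T(r,g))$. Thus $\rho(g)=\rho(f)=\rho$, finite and not an integer.

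Next I would extract the exponential factor. Because $f,g$ share $S_2$ CM, the zero divisors of $P(f)$ and $P(g)$ coincide, so $\Phi:=P(f)/P(g)$ has only finitely many zeros and poles (all lying among the poles of $f$ and $g$); since $\rho(\Phi)\le\rho<\infty$ we may write $\Phi=R\,e^{Q}$ with $R$ rational and $Q$ a polynomial, and $\deg Q\le\rho\notin\mathbb Z$ forces $\deg Q\le\lfloor\rho\rfloor<\rho$ (possibly $\deg Q=0$). Now the non-integer order enters: writing $f=f_1/f_2$ with $f_2$ the polynomial of poles and $f_1$ entire, $\rho(f_1)=\rho\notin\mathbb Z$, so by Hadamard's theorem $f_1$ has infinitely many zeros, of convergence exponent (counted with multiplicity) exactly $\rho$; likewise for $g$ and for $f-\beta_i$, $g-\beta_i$. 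If $z_0$ is a zero of $f$ it is a zero of $g$ by the $S_1$-IM hypothesis, and $P(f)(z_0)=P(g)(z_0)=\beta_1\beta_2\neq 0$, so $\Phi(z_0)=1$; thus $Re^{Q}-1$ vanishes at every zero of $f$, and a local expansion of $P(f)-P(g)=(\Phi-1)P(g)$ near $z_0$ (using $\beta_1+\beta_2\neq 0$) shows it vanishes there to order exactly $\min(\operatorname{ord}_{z_0}f,\operatorname{ord}_{z_0}g)$.

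I would then argue $\Phi\equiv 1$: otherwise $Q$ is non-constant (else $Re^{Q}-1$ is a nonzero rational function with infinitely many zeros), so $Re^{Q}-1$ has order $\deg Q<\rho$ and $n(r,1/(Re^{Q}-1))=O(r^{\deg Q})$, which has to be reconciled with the fact that the zero divisors of $f$ and $g$ have exponent $\rho$ on the common zero set; the contradiction is obtained after passing to $f/g$ — which one shows then has order $\rho\notin\mathbb Z$ with zeros and poles of exponent $\rho$ — and re-inserting this into $P(f)=Re^{Q}P(g)$. Granting $\Phi\equiv 1$, i.e. $P(f)\equiv P(g)$, we get $(f-g)(f+g-\beta_1-\beta_2)\equiv 0$; if $f\not\equiv g$ then $g\equiv\beta_1+\beta_2-f$, so the zeros of $g$ are exactly the $(\beta_1+\beta_2)$-points of $f$, which by the $S_1$-IM hypothesis must coincide with the zeros of $f$, whence any zero $z_0$ of $f$ (there are infinitely many by the previous paragraph) satisfies $0=f(z_0)=\beta_1+\beta_2$, contradicting $\beta_1+\beta_2\neq 0$. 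Therefore $f\equiv g$.

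The hard part is the implication ``$Re^{Q}-1$ vanishes on the zero set of $f\Rightarrow Re^{Q}\equiv 1$''. The naive argument — that the zeros of $f$ are too dense (exponent $\rho$) to all be zeros of a function of order $\deg Q<\rho$ — is not quite legitimate, because the zeros of $f$ may be highly multiple, so that \emph{as a set} they can have convergence exponent strictly below $\rho$, and $Re^{Q}-1$ vanishes only to order $\min(m_k,n_k)$, not $m_k$, at the common zero $z_k$. Resolving this is the crux: either one shows that $f$ and $g$ in fact share $\{0\}$ essentially CM (so $m_k=n_k$ and the density argument closes), or, in the contrary case, one exploits in $P(f)=Re^{Q}P(g)$ that $f/g$ then has non-integer order $\rho$. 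Everything else is elementary algebra or a routine use of the Second Fundamental Theorem and Hadamard factorization, with the hypothesis ``$\rho$ non-integer'' entering precisely through the strict inequality $\deg Q\le\lfloor\rho\rfloor<\rho$.
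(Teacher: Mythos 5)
The paper does not actually prove Theorem G --- it imports it from Chen \cite{2C} --- so there is no in-paper proof to measure you against; judged on its own terms, your proposal has a genuine gap, and it sits exactly where you say it does. Everything up to $P(f)/P(g)=Re^{Q}$ with $R$ rational and $\deg Q\le\lfloor\rho\rfloor<\rho$ is sound, and so is the endgame from $P(f)\equiv P(g)$: the factorization $(f-g)(f+g-\beta_{1}-\beta_{2})\equiv 0$, the use of $\beta_{1}+\beta_{2}\neq 2\alpha$, and the existence of infinitely many zeros of $f$ via Borel's theorem all check out. But the implication ``$Re^{Q}-1$ vanishes on the common zero set of $f$ and $g$ implies $Re^{Q}\equiv 1$'' is the entire analytic content of the theorem in your formulation, and it is not proved: as you yourself observe, the IM sharing of $\{\alpha\}$ only controls the reduced counting function $\overline{N}(r,\alpha;f)$, while Borel's theorem pins down the exponent of the integrated one $N(r,\alpha;f)$, and these can differ arbitrarily when zeros are highly multiple, so the density count against a function of order $\deg Q<\rho$ does not close. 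Neither of the two escape routes you name is routine: showing the zeros are ``essentially shared CM'' is not easier than the original problem, and $f/g$ need not have order $\rho$, since cancellation between the zero divisors of $f$ and $g$ is precisely what you cannot exclude at this stage. A proof that leaves this step as a disjunction of two unexecuted strategies is not a proof.

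For orientation, the machinery the paper uses for its own main results (Theorems 2.1--2.4, where $\sharp S_{1}=n$ is large) avoids this issue by a different route: the auxiliary function $H=F''/F'-G''/G'$ with $F=1/Q(f)$, $G=1/Q(g)$ together with the second fundamental theorem forces $1/Q(f)\equiv c_{0}/Q(g)+c_{1}$, the constants $c_{1}\neq0$ and $c_{0}\neq1$ are eliminated by further applications of the second fundamental theorem, and only then does one upgrade to ``$f$ and $g$ share $\beta_{1}$, $\beta_{2}$, $\infty$ CM'' and invoke the three-shared-values theorem (Lemma 3.2, i.e.\ Theorem 2.19 of Yang--Yi), which is where the non-integer order is actually spent. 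That argument needs $n$ large and does not literally apply to a singleton $S_{1}$, but it points to the standard resolution of your crux: rather than a zero-density argument against $Re^{Q}-1$, one separates the CM sharing of the set $\{\beta_{1},\beta_{2}\}$ into CM sharing of the individual values and lands in the Yang--Yi three-value theorem. You should either adopt such a strategy or supply a complete argument for the step you flagged.
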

A recent advent in the uniqueness theory of meromorphic functions is the introduction of the notion of weighted sharing instead CM sharing.
\begin{defi}(\cite{L})
  Let $l$ be a non-negative integer or infinity. For $a\in\mathbb{C}\cup\{\infty\}$, we denote by $E_{l}(a;f)$, the set of all $a$-points of $f$, where an $a$-point of multiplicity $m$ is counted $m$ times if $m\leq l$ and $l+1$ times if $m>l$. \par
If for two meromorphic functions $f$ and $g$, we have $E_{l}(a;f)=E_{l}(a;g)$, then we say that $f$ and $g$ share the value $a$ with weight $l$.
\end{defi}
The IM and CM sharing respectively correspond to weight $0$ and $\infty$.
\begin{defi}
For $S\subset \mathbb{C}\cup\{\infty\}$, we define $E_{f}(S,l)$ as $$E_{f}(S,l)=\displaystyle\bigcup_{a\in S}E_{l}(a;f),$$
where $l$ is a non-negative integer or infinity. Clearly $E_{f}(S)=E_{f}(S,\infty)$.\par
We say that $f$ and $g$ share $S$ with weight $l$, or simply $f$ and $g$ share $(S,l)$ if $E_{f}(S,l)=E_{g}(S,l)$.
\end{defi}
Regarding Theorems F and G, in (\cite{SK, SS}), the following question was asked:
\begin{ques}\label{bt3}(\cite{SK, SS})
It seems reasonable to conjecture that Theorems F and G still hold if  $f$ and $g$ share $(S_{1}, 2)$ (or, possibly, $(S_{1}, 0)$) and $S_{2}$ IM.
\end{ques}
In this direction, the next two theorems are given by P. Sahoo and H. Karmakar (\cite{SK}); and P. Sahoo and A. Sarkar (\cite{SS}) respectively. Before going to state their results, we need to recall the following definitions:
\begin{defi} (\cite{SK, SS})
Let $n$ be a positive integer and $S_{1} = \{\alpha_{1},\alpha_{2},\ldots,\alpha_{n}\}$, where $\alpha_{i}$'s are non-zero distinct complex constants. Suppose that
\bea\label{eqn1} P(z)=\frac{z^{n}-(\sum\alpha_{i})z^{n-1}+\ldots+(-1)^{n-1}(\sum \alpha_{i_{1}}\alpha_{i_{2}}\ldots\alpha_{i_{n-1}})z}{(-1)^{n+1}\alpha_{1}\alpha_{2}\ldots\alpha_{n}}.\eea
Let $m_{1}$ be the number of simple zeros of $P(z)$ and $m_{2}$ be the number of multiple zeros of $P(z)$. Then we define $\Gamma_{1} := m_{1}+m_{2}$
and $\Gamma_{2} := m_{1} + 2m_{2}$.
\end{defi}
\begin{defi}
  Let $$Q(z):=(z-\alpha_{1})(z-\alpha_{2})\ldots(z-\alpha_{n}).$$
  Then $Q(z)=(-1)^{n+1}\alpha_{1}\alpha_{2}\ldots\alpha_{n}\{P(z)-1\}$ and $Q'(z)=(-1)^{n+1}\alpha_{1}\alpha_{2}\ldots\alpha_{n}P'(z)$.\par
\end{defi}
Now, we state the results of Sahoo and Karmakar (\cite{SK}) and Sahoo and Sarkar (\cite{SS}) respectively.
\begin{theoH}(\cite{SK})
Let $f,g\in M_{1}(\mathbb{C})$ and $S_{1}=\{\alpha_1,\alpha_2,\ldots,\alpha_n\}$, $S_{2}=\{\beta_{1},\beta_{2}\}$, where $\alpha_1,\alpha_2,\ldots,\alpha_n, \beta_{1},\beta_{2}$ are $n+2$ distinct non-zero complex constants satisfying $n>2\Gamma_{2}.$ If $f$ and $g$ share $(S_{1},2)$ and $S_{2}$ IM, then $f\equiv g$, provided
$$(\beta_{1}-\alpha_{1})^{2}(\beta_{1}-\alpha_{2})^{2}\ldots(\beta_{1}-\alpha_{n})^{2}\not=(\beta_{2}-\alpha_{1})^{2}(\beta_{2}-\alpha_{2})^{2}\ldots(\beta_{2}-\alpha_{n})^{2}$$
and $f$ is of non-integer finite order.
\end{theoH}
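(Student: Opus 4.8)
The plan is to pass to the auxiliary functions $F:=P(f)$ and $G:=P(g)$, reduce the set‑sharing hypothesis to ordinary weighted value‑sharing, and then exhaust the resulting cases, using the cardinality bound $n>2\Gamma_2$ to kill the ``generic'' Nevanlinna case, the hypothesis $f,g\in M_1(\mathbb C)$ to control poles, and the non‑integer finite order of $f$ to eliminate the algebraic cases. Since $Q(z)=(-1)^{n+1}\alpha_1\cdots\alpha_n\,(P(z)-1)$ and the $\alpha_i$ are distinct, the zeros of $F-1$ are precisely the $S_1$‑points of $f$ with their multiplicities, so sharing $(S_1,2)$ is the same as $F$ and $G$ sharing the value $1$ with weight $2$. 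As $f,g\in M_1(\mathbb C)$, the functions $F,G$ have finitely many poles, whence $N_2(r,\infty;F)=N_2(r,\infty;G)=S(r)$; moreover $T(r,F)=nT(r,f)+O(1)$ by the Valiron theorem, and an elementary count (each simple zero of $P$ contributes at most $N(r,\cdot;f)\le T(r,f)$ to $N_2(r,0;F)$ and each multiple zero at most $2\overline N(r,\cdot;f)\le 2T(r,f)$) gives $N_2(r,0;F)\le\Gamma_2 T(r,f)+O(1)$ and likewise for $G$. Note also that $f$ is transcendental, since a rational function has order $0\in\mathbb Z$.

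Next I would invoke a standard weighted‑sharing lemma (in the spirit of Lahiri and Banerjee): since $F$ and $G$ share $(1,2)$, one of the following holds: (i) $T(r,F)+T(r,G)\le 2\{N_2(r,0;F)+N_2(r,0;G)+N_2(r,\infty;F)+N_2(r,\infty;G)\}+S(r,F)+S(r,G)$; or (ii) $F\equiv G$; or (iii) $FG\equiv 1$. In case (i), substituting the estimates above yields $(n-2\Gamma_2)\big(T(r,f)+T(r,g)\big)\le S(r,f)+S(r,g)$, which is impossible since $n>2\Gamma_2$ and $f,g$ are transcendental. In case (iii), $P(f)P(g)\equiv 1$ forces every zero of $P(f)$ to be a pole of $g$; as $g\in M_1(\mathbb C)$, $f$ takes each of the distinct zeros of $P$ only finitely often. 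If $P$ has at least two distinct zeros, the Second Main Theorem for $f$ (with $\overline N(r,\infty;f)=O(\log r)$) gives $T(r,f)=S(r,f)+O(\log r)$, so $f$ is rational --- a contradiction; if $P$ has a single ($n$‑fold, necessarily at $0$) zero, then $f$ has finitely many zeros and finitely many poles, so $f=(p_1/p_2)e^{h}$ with $h$ a polynomial by finiteness of the order, giving $\rho(f)\in\mathbb Z_{\ge 0}$ --- again a contradiction. Hence only case (ii) survives.

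It remains to show that $F\equiv G$, i.e. $Q(f)\equiv Q(g)$, forces $f\equiv g$; this is where $S_2$ and the product condition enter. Suppose $f\not\equiv g$. If $f$ took some $\beta_j$ only finitely often, then $f-\beta_j$ would have finitely many zeros and (being in $M_1(\mathbb C)$) finitely many poles, hence $f-\beta_j=(p_1/p_2)e^{h}$ with $h$ a polynomial, so $\rho(f)\in\mathbb Z$ --- contrary to hypothesis; thus $f$, and symmetrically $g$, assumes each of $\beta_1,\beta_2$ infinitely often. At a $\beta_1$‑point $z_0$ of $f$, the $S_2$‑IM‑sharing gives $g(z_0)\in\{\beta_1,\beta_2\}$, while $P(g)(z_0)=P(f)(z_0)=P(\beta_1)$; since the product hypothesis forces $Q(\beta_1)\ne Q(\beta_2)$, hence $P(\beta_1)\ne P(\beta_2)$, we get $g(z_0)=\beta_1$. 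Writing $Q(w)-Q(\beta_1)=(w-\beta_1)^{k}\widetilde Q(w)$ with $\widetilde Q(\beta_1)\ne 0$ and comparing the vanishing orders of the identity $Q(f)-Q(\beta_1)=Q(g)-Q(\beta_1)$ at $z_0$ shows the $\beta_1$‑points of $f$ and of $g$ coincide with multiplicity; by symmetry the same holds at $\beta_2$. Hence $f$ and $g$ share $\beta_1$ CM and $\beta_2$ CM, and since $f,g\in M_1(\mathbb C)$ we may write $(f-\beta_j)/(g-\beta_j)=R_je^{h_j}$ with $R_j$ rational and, by finiteness of order, $h_j$ a polynomial, $j=1,2$. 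If $R_je^{h_j}\equiv 1$ for some $j$ then $f\equiv g$, a contradiction; if $R_1e^{h_1}$ and $R_2e^{h_2}$ are both constant, subtracting the identities $f-\beta_j=(g-\beta_j)R_je^{h_j}$ forces $g$ constant, again impossible; so some $h_j$ is non‑constant. Solving those two identities for $g$ exhibits $g$ as a rational function of $z,e^{h_1},e^{h_2}$, so $\rho(g)\le\max(\deg h_1,\deg h_2)=:d\in\mathbb Z_{\ge 1}$; on the other hand $\rho(R_je^{h_j})=d$ for the index $j$ realizing $d$, and $R_je^{h_j}=(f-\beta_j)/(g-\beta_j)$ forces $\rho(f)\ge d$, whence (as $\rho(f)=\rho(g)$ because $Q(f)\equiv Q(g)$ yields $T(r,f)=T(r,g)+O(1)$) $\rho(g)=d\in\mathbb Z$ --- contradicting the non‑integer order of $f$. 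Therefore $f\equiv g$.

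The main obstacle I expect is case (ii): promoting the mere IM‑coincidence of $S_2$‑points to a multiplicity‑preserving coincidence of the $\beta_1$‑ and $\beta_2$‑points \emph{separately} (the vanishing‑order comparison for $Q(f)-Q(\beta_j)$), and then squeezing out the final contradiction from the non‑integer order. Along the way one must treat with care the degenerate subcases (the factors $R_je^{h_j}$ reducing to constants, the possibility $Q'(\beta_j)=0$, and the precise value‑distribution consequences of $f,g\in M_1(\mathbb C)$), and one must also justify the weighted‑sharing lemma in the precise form used for $(1,2)$‑sharing.
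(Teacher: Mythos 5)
A preliminary point: the paper never proves Theorem H. It quotes it from Sahoo--Karmakar \cite{SK}, and the remark following Theorem I argues that the published proof rests on the inequalities (\ref{bt1})--(\ref{bt2}), which fail because zeros of $P(f)$ can come from $\zeta$-points of $f$ for \emph{any} zero $\zeta$ of $P$, not only from zeros of $f$; the paper's own Theorems \ref{thb1.1}--\ref{thb1.4} then alter the hypotheses (e.g.\ to $n>\max\{2k+2,2\Gamma_{1}\}$) and argue by a different method, namely the auxiliary function $H=F''/F'-G''/G'$ with $F=1/Q(f)$ together with the second fundamental theorem at the zeros of $P'$. Your proposal instead follows the original route ($F=P(f)$, $G=P(g)$ sharing $(1,2)$, the standard three-alternative lemma for weight-$2$ value sharing), but with the decisive correction that you bound $N_{2}(r,0;P(f))$ by $\Gamma_{2}T(r,f)+O(1)$ --- summing over \emph{all} distinct zeros of $P$, with $N(r,b_{j};f)\leq T(r,f)$ for simple zeros and $2\overline{N}(r,c_{i};f)\leq 2T(r,f)$ for multiple ones --- rather than by $\Gamma_{2}\overline{N}(r,0;f)$. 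That is precisely the repair the paper's counterexample calls for, the numerology still closes under $n>2\Gamma_{2}$, and your handling of the alternatives $FG\equiv 1$ and $F\equiv G$ (upgrading IM sharing of $S_{2}$ to CM sharing of $\beta_{1}$, $\beta_{2}$ and $\infty$ by comparing vanishing orders in $Q(f)\equiv Q(g)$, then excluding $f\not\equiv g$ via the non-integer order) is a self-contained substitute for what the paper delegates to Lemma \ref{lem2} and Theorem 2.19 of \cite{YY}. So your argument is sound and, written out in full, actually establishes Theorem H as stated --- more than the paper itself does. Two small points to tidy: state the weight-$2$ sharing lemma precisely with a citation (it is standard and holds in the form you use); and in the endgame note that, since $f$ and $g$ share $\beta_{j}$ and $\infty$ CM, the ratio $(f-\beta_{j})/(g-\beta_{j})$ is already zero-free and pole-free, so $R_{j}\equiv 1$ may be assumed, while the subcase in which both factors are constant yields either $g$ constant \emph{or} $f\equiv g$ (the latter being the desired conclusion), not only ``$g$ constant''.
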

\begin{theoI}(\cite{SS})
Let $f,g\in M_{1}(\mathbb{C})$ and $S_{1}=\{\alpha_1,\alpha_2,\ldots,\alpha_n\}$, $S_{2}=\{\beta_{1},\beta_{2}\}$, where $\alpha_1,\alpha_2,\ldots,\alpha_n, \beta_{1},\beta_{2}$ are $n+2$ distinct non-zero complex constants satisfying $n>2\Gamma_{2}+3\Gamma_{1}.$ If $f$ and $g$ share $S_{1}$ and $S_{2}$ IM, then $f\equiv g$, provided
$$(\beta_{1}-\alpha_{1})^{2}(\beta_{1}-\alpha_{2})^{2}\ldots(\beta_{1}-\alpha_{n})^{2}\not=(\beta_{2}-\alpha_{1})^{2}(\beta_{2}-\alpha_{2})^{2}\ldots(\beta_{2}-\alpha_{n})^{2}$$
and $f$ is of non-integer finite order.
\end{theoI}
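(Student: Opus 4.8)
\noindent The plan is to run the standard set‑sharing machinery through the auxiliary polynomial $P$ of \eqref{eqn1}, with the non‑integral order hypothesis used at exactly two spots: to forbid omitted values, and to annihilate the exceptional configurations at the end. Put $F=P(f)$ and $G=P(g)$. Since $P(z)-1$ is, up to the nonzero constant $(-1)^{n+1}\alpha_1\cdots\alpha_n$, equal to $Q(z)=(z-\alpha_1)\cdots(z-\alpha_n)$ and hence has exactly the $n$ simple zeros $\alpha_1,\dots,\alpha_n$, the hypothesis ``$f,g$ share $S_1$ IM'' is equivalent to ``$F$ and $G$ share the value $1$ IM''. Because $f,g\in M_1(\mathbb{C})$ we have $\overline{N}(r,\infty;F)=\overline{N}(r,\infty;G)=O(\log r)=S(r)$; as $P$ has $\Gamma_1$ distinct zeros, $m_2$ of them multiple, the zeros of $F=P(f)$ satisfy $\overline{N}(r,0;F)\le\Gamma_1 T(r,f)+S(r)$ and $N_2(r,0;F)\le\Gamma_2 T(r,f)+S(r)$ (and symmetrically for $G$); and $T(r,F)=n\,T(r,f)+O(1)$ by the Valiron--Mokhon'ko theorem. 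Two preliminary observations use the order hypothesis. First, applying the second main theorem to $f$ with the values $\alpha_1,\dots,\alpha_n,\infty$ and noting $\overline{N}(r,0;Q(f))=\overline{N}(r,0;Q(g))$ yields $T(r,f)\asymp T(r,g)$, so $\rho(g)=\rho(f)$ is finite and non‑integral. Second, if $f$ (or $g$) attained some finite value $c$ only finitely often, then $f-c$ would be a rational function times $e^{(\text{polynomial})}$, forcing $\rho(f)\in\mathbb{Z}_{\ge0}$; hence \emph{$f$ and $g$ omit no finite value}.

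Next I would introduce the usual auxiliary function $H=\bigl(\tfrac{F''}{F'}-\tfrac{2F'}{F-1}\bigr)-\bigl(\tfrac{G''}{G'}-\tfrac{2G'}{G-1}\bigr)$. If $H\not\equiv0$, the standard estimate for two meromorphic functions sharing a value IM, together with $\overline{N}(r,\infty;F)=\overline{N}(r,\infty;G)=S(r)$, gives
\[
T(r,F)+T(r,G)\le 2\bigl(N_2(r,0;F)+N_2(r,0;G)\bigr)+3\bigl(\overline{N}(r,0;F)+\overline{N}(r,0;G)\bigr)+S(r);
\]
substituting the bounds above yields $n\bigl(T(r,f)+T(r,g)\bigr)\le(2\Gamma_2+3\Gamma_1)\bigl(T(r,f)+T(r,g)\bigr)+S(r)$, contradicting $n>2\Gamma_2+3\Gamma_1$ since $f,g$ are transcendental. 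Hence $H\equiv0$, and a double integration gives $\frac{1}{F-1}=\frac{A}{G-1}+B$ with $A\ne0$. If $B\ne0$, then from $\frac{A}{G-1}=\frac{1}{F-1}-B$ we see $G$ has a pole at every point where $F=1+\frac1B$; but $1+\frac1B$ is a finite value $\ne1$, so $f$ attains some $P$‑preimage of it infinitely often, forcing $g$ to have infinitely many poles and contradicting $g\in M_1(\mathbb{C})$. Therefore $B=0$, i.e.\ $Q(g)\equiv A\,Q(f)$.

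It remains to pin down $A$ and conclude. Since $f$ attains $\beta_1$ and since $f,g$ share $S_2$ IM, at a $\beta_1$‑point $z_0$ of $f$ we have $g(z_0)\in\{\beta_1,\beta_2\}$, so evaluating $Q(g)\equiv A\,Q(f)$ at $z_0$ forces $A=1$ or $A=Q(\beta_2)/Q(\beta_1)$; the same argument at a $\beta_2$‑point of $f$ gives $A=1$ or $A=Q(\beta_1)/Q(\beta_2)$, and the two non‑trivial alternatives together would give $Q(\beta_1)^2=Q(\beta_2)^2$, which the hypothesis excludes. Hence $A=1$, i.e.\ $Q(f)\equiv Q(g)$, equivalently $P(f)\equiv P(g)$. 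The hypothesis $Q(\beta_1)^2\ne Q(\beta_2)^2$ also forces $Q(\beta_1)\ne Q(\beta_2)$, hence $P(\beta_1)\ne P(\beta_2)$, and then $P(f)\equiv P(g)$ upgrades the $S_2$‑sharing to: $f$ and $g$ share each of $\{\beta_1\}$ and $\{\beta_2\}$ IM. Finally, choosing a point $z_0$ with $f(z_0)=g(z_0)=\beta_1$ and differentiating the identity $Q(f)\equiv Q(g)$ repeatedly, an induction via Fa\`a di Bruno shows $f^{(k)}(z_0)=g^{(k)}(z_0)$ for every $k\ge0$ \emph{provided $Q'(\beta_1)\ne0$}, whence $f\equiv g$ by the identity theorem; the same works with $\beta_2$ in place of $\beta_1$.

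The main obstacle is precisely the degenerate case $Q'(\beta_1)=Q'(\beta_2)=0$, in which both $\beta_i$ are critical points of $P$ and the naive Taylor induction stalls. Here one should argue that, because $Q(f)\equiv Q(g)$ forces the multiplicities of $f-\beta_i$ and $g-\beta_i$ to agree at every common $\beta_i$‑point (and $f,g$ share poles with equal multiplicities), each quotient $(f-\beta_i)/(g-\beta_i)$ is free of zeros and poles, hence equals $e^{\gamma_i}$ for a polynomial $\gamma_i$; then $f-\beta_1=e^{\gamma_1}(g-\beta_1)$ and $f-\beta_2=e^{\gamma_2}(g-\beta_2)$ together with $\beta_1\ne\beta_2$, the finiteness of the pole sets, and once more the non‑integrality of $\rho(f)$ to force $\gamma_1,\gamma_2$ constant, push one back to $f\equiv g$. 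Making this degenerate case watertight, along with the routine bookkeeping of the constants when $P$ has multiple zeros, is where the real effort lies; everything else is standard Nevanlinna theory.
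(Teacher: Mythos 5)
First, a point of orientation: the paper does not prove Theorem I at all --- it is quoted from \cite{SS}, and Remark 1.1 explicitly disputes the inequalities $N_{2}(r,0;P(f))\leq\Gamma_{2}\overline{N}(r,0;f)$ and $\overline{N}(r,0;P(f))\leq\Gamma_{1}\overline{N}(r,0;f)$ on which the original proof rests; the authors then prove modified statements (Theorems 2.1--2.4) with different hypotheses, working with $F=1/Q(f)$, $G=1/Q(g)$ and $H=F''/F'-G''/G'$ rather than your $F=P(f)$, $G=P(g)$ and the classical two-term $H$. Your Case I is therefore not a reproduction of anything in the paper but a genuine (and correct) repair of the literature's argument: by bounding $\overline{N}(r,0;P(f))\leq\Gamma_{1}T(r,f)+O(1)$ and $N_{2}(r,0;P(f))\leq\Gamma_{2}T(r,f)+O(1)$ --- estimates that hold no matter where the zeros of $P$ sit --- you sidestep the false inequalities while keeping the threshold $n>2\Gamma_{2}+3\Gamma_{1}$. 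Your treatment of $H\equiv 0$ is likewise sound and arguably cleaner than the paper's analogue: the ``no finite value attained only finitely often'' consequence of non-integer order disposes of $B\neq 0$ and pins down $A=1$ without the auxiliary hypotheses ($n>2\Gamma_{1}$, $\Gamma_{1}\geq 3$) the paper needs in its subcases.

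The genuine gap is the endgame after $Q(f)\equiv Q(g)$. Your local-inversion/Fa\`{a} di Bruno argument needs $Q'(\beta_{i})\neq 0$ for at least one $i$, and nothing in the hypotheses guarantees this --- indeed Theorems 2.3 and 2.4 of the paper are devoted precisely to the situation where $\beta_{1},\beta_{2}$ are the two critical points of $P$. Your fallback for the degenerate case is only a sketch, and its crux --- that non-integrality of $\rho(f)$ forces the polynomials $\gamma_{1},\gamma_{2}$ in $f-\beta_{i}=e^{\gamma_{i}}(g-\beta_{i})$ to be constant --- is exactly the content of the theorem you would be re-proving, not a routine verification: eliminating $g$ from the two relations exhibits it as a ratio of exponential polynomials, which a priori may have non-integer order strictly below the obvious integer bound, so ``push one back to $f\equiv g$'' is not yet an argument. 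The repair is short and is what the paper does for its own theorems: $Q(f)\equiv Q(g)$ forces the multiplicities of $\beta_{i}$-points (since $Q-Q(\beta_{i})$ vanishes to a fixed order at $\beta_{i}$) and of poles to agree wherever they coincide, so $f$ and $g$ in fact share $\beta_{1}$, $\beta_{2}$ and $\infty$ CM; Theorem 2.19 of \cite{YY} (the paper's Lemma 3.2, combined with Lemma 3.1) then yields $f\equiv g$ with no non-degeneracy assumption on $Q'$. With that substitution your proof closes.
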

\begin{rem}
  To prove the Theorems H and I, it was assumed in (\cite{SK, SS}) that the following two inequalities are true:
  \bea\label{bt1} N_{2}(r,0;P(f))\leq\Gamma_{2}\overline{N}(r,0;f),\eea
\bea\label{bt2} \overline{N}(r,0;P( f ))\leq \Gamma_{1}\overline{N}(r,0;f),\eea
where $N_{2}(r,0;f):=\overline{N}(r,0;f)+\overline{N}(r,0;f|\geq 2)$ and $\overline{N}(r,0;f|\geq 2)$ is the reduced counting function of those zeros of $f$ whose multiplicities are not less than $2$.\par
But the following example is countering the above two inequalities. It is known from (\cite{FR}) that the polynomial $\frac{(n-1)(n-2)}{2}z^{n}-n(n-2)z^{n-1}+\frac{n(n-1)}{2}z^{n-2}-c$, where $c (\not=0,1)\in \mathbb{C}$, $n(\geq 11)\in \mathbb{N}$, has $n$ distinct zeros. Let $$\mathcal{P}(z)=\frac{\frac{(n-1)(n-2)}{2}z^{n}-n(n-2)z^{n-1}+\frac{n(n-1)}{2}z^{n-2}}{c},$$ then
$$\mathcal{P}(z)=\frac{\frac{(n-1)(n-2)}{2}z^{n-2}(z-\gamma_{1})(z-\gamma_{2})}{c},$$
where $\gamma_{1},\gamma_{2}$ are the zeros of $z^{2}-\frac{2n}{n-1}z+\frac{n}{n-2}=0$. Here, we observed that the zeros of $\mathcal{P}(f)$ may come from $\gamma_{i}$-points of $f$. Thus the two inequalities (\ref{bt1}, \ref{bt2}) are not working always.
\end{rem}
The aim of this paper is to answer the question \ref{bt3} with the necessary modifications of those two inequalities (\ref{bt1}) and (\ref{bt2}).
\section{Main Results}
Let $n$ be a positive integer and $S_{1} = \{\alpha_{1},\alpha_{2},\ldots,\alpha_{n}\}$, where $\alpha_{i}$'s are non-zero distinct complex constants. Suppose that
\bea\label{eqn1} P(z)=\frac{z^{n}-(\sum\alpha_{i})z^{n-1}+\ldots+(-1)^{n-1}(\sum \alpha_{i_{1}}\alpha_{i_{2}}\ldots\alpha_{i_{n-1}})z}{(-1)^{n+1}\alpha_{1}\alpha_{2}\ldots\alpha_{n}}.\eea
Let $m_{1}$ be the number of simple zeros of $P(z)$ and $m_{2}$ be the number of multiple zeros of $P(z)$. Then we define $\Gamma_{1} := m_{1}+m_{2}$
and $\Gamma_{2} := m_{1} + 2m_{2}$. Further suppose that $P'(z)$ has $k$-distinct zeros.
\begin{theo}\label{thb1.1}
Let $f,g\in M_{1}(\mathbb{C})$ and $S_{1}=\{\alpha_1,\alpha_2,\ldots,\alpha_n\}$, $S_{2}=\{\beta_{1},\beta_{2}\}$, where $\alpha_1,\alpha_2,\ldots,\alpha_n, \beta_{1},\beta_{2}$ are $n+2$ distinct non-zero complex constants satisfying $n>\max\{2k+2, 2\Gamma_{1}$\}, where $k\geq 2$ and $\Gamma_{1}\geq 3$. If $f$ and $g$ share $(S_{1},2)$ and $S_{2}$ IM, then $f\equiv g$, provided
$$(\beta_{1}-\alpha_{1})(\beta_{1}-\alpha_{2})\ldots(\beta_{1}-\alpha_{n})\not=(\beta_{2}-\alpha_{1})(\beta_{2}-\alpha_{2})\ldots(\beta_{2}-\alpha_{n})$$
and $f$ is of non-integer finite order.
\end{theo}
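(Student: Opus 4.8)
Work throughout with the auxiliary functions $F:=P(f)$ and $G:=P(g)$. Since each $\alpha_i$ is a simple zero of $Q$ and $P-1$ equals $Q$ times the nonzero constant $\big((-1)^{n+1}\alpha_1\cdots\alpha_n\big)^{-1}$, a point lies in $E_f(S_1)$ with multiplicity $m$ exactly when it is a $1$-point of $F$ of multiplicity $m$; hence $f$ and $g$ share $(S_1,2)$ if and only if $F$ and $G$ share $(1,2)$. Also $P(0)=0$, the product hypothesis reads precisely $P(\beta_1)\neq P(\beta_2)$, and as $\deg P=n$ we have $F,G\in M_1(\mathbb{C})$ with $T(r,F)=nT(r,f)+O(1)$, $T(r,G)=nT(r,g)+O(1)$, so $F$ and $G$ have the same positive non-integer finite order as $f$ and $g$. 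I will use repeatedly the following remark: if $h\in M_1(\mathbb{C})$ has finite non-integer order and attains some value $w\in\mathbb{C}$ only finitely often, then $h-w$ has finitely many zeros and poles, so $h-w=R\,e^{q}$ with $R$ rational and $q$ a polynomial (finiteness of order), whence $\rho(h)=\deg q\in\mathbb{Z}$ or $h$ is rational --- a contradiction; thus such an $h$ attains every finite value infinitely often.

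Next introduce
$$H=\left(\frac{F''}{F'}-\frac{2F'}{F-1}\right)-\left(\frac{G''}{G'}-\frac{2G'}{G-1}\right)$$
and distinguish $H\not\equiv0$ from $H\equiv0$. If $H\not\equiv0$, one applies the standard counting estimates for two functions sharing $(1,2)$, in which $T(r,F)$ and $T(r,G)$ are dominated by $N_2$- and $\overline N$-type terms of $F$, $G$ at $0$, $\infty$ and of their derivatives. Here the \emph{corrected} forms of \eqref{bt1}--\eqref{bt2} are indispensable: the zeros of $P(f)$ are governed not by $\overline N(r,0;f)$ but by $\sum_{j}\overline N(r,\gamma_j;f)$ over the $\Gamma_1$ distinct zeros $\gamma_j$ of $P$ (with $\overline N(r,\gamma_j;f\,|\geq2)$ added for the simple $\gamma_j$'s in the $N_2$-version), and the zeros of $(P(f))'=P'(f)f'$ by $\overline N(r,0;f')+\sum_{j=1}^{k}\overline N(r,\delta_j;f)$ over the $k$ distinct zeros $\delta_j$ of $P'$. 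Since $f,g\in M_1(\mathbb{C})$ the pole terms are $O(\log r)$, and $S(r,\cdot)=O(\log r)$ for finite order; substituting these bounds collapses the $(1,2)$-estimate into an inequality of the form $n\,T(r,f)\leq(2k+2)\,T(r,f)+S(r,f)$ or $n\,T(r,f)\leq2\Gamma_1\,T(r,f)+S(r,f)$ (according to the subcase in the $(1,2)$-analysis), contradicting $n>\max\{2k+2,2\Gamma_1\}$. Hence $H\equiv0$.

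When $H\equiv0$, integrating twice gives a bilinear relation $\dfrac{1}{F-1}=\dfrac{bG+a-b}{G-1}$ with constants $a\neq0$, $b$, equivalently $G=\dfrac{(a-b)F+(1-a+b)}{-bF+(1+b)}$. If $b\neq0$, then $G$ has a pole at every point where $F=\frac{1+b}{b}$ (the numerator there equals $a/b\neq0$), so $G=P(g)\in M_1(\mathbb{C})$ forces $F=P(f)$ to attain the value $\frac{1+b}{b}$ only finitely often; taking any root $\theta$ of $P(z)-\frac{1+b}{b}$, $f$ attains $\theta$ only finitely often, contradicting the remark. If $b=0$ and $a\neq1$, then $P(g)=aP(f)+1-a$; evaluating this at a $\beta_1$-point of $f$ (which exists by the remark), the value $g=\beta_1$ is impossible there (it forces $P(\beta_1)=1$, i.e.\ $\beta_1\in S_1$), so $g=\beta_2$ there and $P(\beta_2)=aP(\beta_1)+1-a$; symmetrically $P(\beta_1)=aP(\beta_2)+1-a$; subtracting and using $P(\beta_1)\neq P(\beta_2)$ gives $a=-1$, hence $P(f)+P(g)\equiv2$ and $P(\beta_1)+P(\beta_2)=2$. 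If $b=0$ and $a=1$, then $P(f)\equiv P(g)$.

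Finally, dispose of the two remaining cases. For $P(f)\equiv P(g)$: writing $P(x)-P(y)=(x-y)\Phi(x,y)$ with $\Phi(x,x)=P'(x)$, every zero of $f-g$ is a $\delta_j$-point of $f$, so the zeros of $f-g$ are counted by $\sum_{j=1}^{k}\overline N(r,\delta_j;f)$; combined with the second main theorem for $f$ at $\delta_1,\dots,\delta_k,\infty$, with the fact that the $\alpha_i$ are regular points of $P$ (as $P-1$ has only simple zeros), and with the hypotheses $k\geq2$ and $n>2k+2$ --- which force the critical values $P(\delta_j)$ to be pairwise distinct and, by Fujimoto's criterion (equivalently, the residual curve $\{P(x)=P(y)\}\setminus\{x=y\}$ has geometric genus $\geq2$, while a genus-one component would make $f,g$ elliptic of integer order $2$, again impossible), exclude $f\not\equiv g$ --- one obtains $f\equiv g$. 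The genuinely new and hardest case is $P(f)+P(g)\equiv2$, which Theorems H and I avoid through the stronger hypothesis $(P(\beta_1)-1)^2\neq(P(\beta_2)-1)^2$: here $f$ and $g$ share $S_1$ CM (since $Q(f)\equiv-Q(g)$) while their $\beta_1$- and $\beta_2$-points are interchanged. The plan is to feed $Q(f)\equiv-Q(g)$ back through the uniqueness-polynomial property so as to force $f$ and $g$ to be exchanged by the reflection $z\mapsto\beta_1+\beta_2-z$, which imposes an odd symmetry of $P$ about $(\beta_1+\beta_2)/2$; a direct computation then shows that such a $P$ has at least $(n+1)/2$ distinct zeros, contradicting $n>2\Gamma_1$, while the remaining configurations are ruled out by the same analysis of the critical structure of $P$ that underlies Fujimoto's criterion. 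Carrying out this last argument cleanly, using only $P(\beta_1)\neq P(\beta_2)$, is the main obstacle; it is in closing this case, together with the $(1,2)$-estimate above, that the remaining hypotheses $\Gamma_1\geq3$ and $n>2\Gamma_1$ are used.
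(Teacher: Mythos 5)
Your architecture coincides with the paper's: your $H$ built from $F=P(f)$, $G=P(g)$ is literally the same function as the paper's $H$ built from $1/Q(f)$, $1/Q(g)$ (these are M\"obius transforms of each other), the dichotomy $H\not\equiv0$ / $H\equiv0$ is the same, and after integration you reach the same bilinear relation. Your disposal of the case $b\neq0$ via the Hadamard-factorization remark is correct and actually cleaner than the paper's (which splits on whether the omitted value of $Q(g)$ equals $-\mu$ and runs the second main theorem twice). The problem is the endgame, where you have two genuine gaps.

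First, the case $P(g)\equiv aP(f)+1-a$ with $a\neq1$ --- including the subcase $a=-1$, i.e.\ $P(f)+P(g)\equiv2$, which you explicitly leave open and describe as the main obstacle --- needs no reflection symmetry, no uniqueness-polynomial property, and no genus computation. Apply the second main theorem to $P(f)$ at the three distinct values $0$, $1-\tfrac1a$, $\infty$: one has $P(f)=1-\tfrac1a$ exactly where $P(g)=0$, and $P$ has only $\Gamma_1$ distinct zeros, so $\overline{N}(r,0;P(f))\leq\Gamma_1T(r,f)+O(1)$, $\overline{N}\bigl(r,1-\tfrac1a;P(f)\bigr)=\overline{N}(r,0;P(g))\leq\Gamma_1T(r,g)+O(1)$, and $\overline{N}(r,\infty;P(f))=O(\ln r)$. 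Since $T(r,g)=T(r,f)+O(1)$, this gives $nT(r,f)\leq2\Gamma_1T(r,f)+O(\ln r)+S(r,f)$, contradicting $n>2\Gamma_1$. This kills every $a\neq1$ at once, so your derivation of $a=-1$ from the $S_2$-sharing, while correct, is a detour into a dead end.

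Second, in the surviving case $P(f)\equiv P(g)$ your proposed route is not merely harder but unjustified: the hypotheses $k\geq2$, $n>2k+2$, $\Gamma_1\geq3$ do not force the critical values $P(\delta_j)$ to be pairwise distinct, and $P$ need not be a uniqueness polynomial, so the Fujimoto/genus argument cannot be run from these assumptions. Note also that your plan never invokes the shared set $S_2$ or the hypothesis $Q(\beta_1)\neq Q(\beta_2)$ in this case --- which is exactly where they are needed. The paper's argument: from $Q(f)\equiv Q(g)$, a $\beta_1$-point of $f$ satisfies $Q(g)=Q(\beta_1)\neq Q(\beta_2)$, so by the IM sharing of $S_2$ it must be a $\beta_1$-point of $g$, and symmetrically; hence $f$ and $g$ share $\beta_1$, $\beta_2$ and $\infty$, and the three-shared-values uniqueness theorem for functions of non-integer finite order (Yang--Yi, Theorem 2.19, the paper's Lemma 3.2) yields $f\equiv g$.
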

\begin{theo}\label{thb1.2}
Let $f,g\in M_{1}(\mathbb{C})$ and $S_{1}=\{\alpha_1,\alpha_2,\ldots,\alpha_n\}$, $S_{2}=\{\beta_{1},\beta_{2}\}$, where $\alpha_1,\alpha_2,\ldots,\alpha_n, \beta_{1},\beta_{2}$ are $n+2$ distinct non-zero complex constants satisfying $n>\max\{2k+5, 2\Gamma_{1}$\}, where $k\geq 2$  and $\Gamma_{1}\geq 3$. If $f$ and $g$ share $S_{1}$ and $S_{2}$ IM, then $f\equiv g$, provided
$$(\beta_{1}-\alpha_{1})(\beta_{1}-\alpha_{2})\ldots(\beta_{1}-\alpha_{n})\not=(\beta_{2}-\alpha_{1})(\beta_{2}-\alpha_{2})\ldots(\beta_{2}-\alpha_{n})$$
and $f$ is of non-integer finite order.
\end{theo}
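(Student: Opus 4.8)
We outline a proof of Theorem \ref{thb1.2}; Theorem \ref{thb1.1} follows by the same scheme, the weight $2$ on $S_{1}$ replacing the constant $2k+5$ by $2k+2$. Put $F:=P(f)$ and $G:=P(g)$. Since $f,g\in M_{1}(\mathbb{C})$, also $F,G\in M_{1}(\mathbb{C})$, so $\overline N(r,\infty;F)=O(\log r)=S(r,f)$ and similarly for $G$. Because $P(z)-1=-Q(z)/Q(0)$ has exactly the $n$ simple zeros $\alpha_{1},\dots,\alpha_{n}$, the condition that $f$ and $g$ share $S_{1}$ IM is equivalent to $F$ and $G$ sharing the value $1$ IM, the multiplicities of the $1$-points of $F$ (resp.\ $G$) being the corresponding $\alpha_{i}$-multiplicities of $f$ (resp.\ $g$); in particular $\overline N(r,1;F)=\sum_{i}\overline N(r,\alpha_{i};f)$. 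The plan is to prove first that $P(f)\equiv P(g)$ and then to upgrade this to $f\equiv g$ with the help of $S_{2}$ and the hypothesis $\prod_{i}(\beta_{1}-\alpha_{i})\neq\prod_{i}(\beta_{2}-\alpha_{i})$.

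First I would show that the Lahiri function
$$
\mathcal{H}=\left(\frac{F''}{F'}-\frac{2F'}{F-1}\right)-\left(\frac{G''}{G'}-\frac{2G'}{G-1}\right)
$$
vanishes identically. Assuming $\mathcal{H}\not\equiv0$, one estimates $N(r,\infty;\mathcal{H})$: its poles can only come from (a) the common $1$-points of $F,G$ with unequal multiplicities (a simple pole each), (b) the zeros of $F'$ not lying over a zero of $P$ nor over a zero of $P-1$, and the symmetric zeros of $G'$, and (c) the finitely many poles of $f,g$. The decisive point in (b) --- and the place where the corrected estimates replacing \eqref{bt1}--\eqref{bt2} enter --- is that such a zero of $F'=P'(f)f'$ is either a multiple point of $f$ or lies over one of the $k$ zeros of $P'$: the first type is counted by $\overline N(r,0;f)$ (and the ramification of $f$), while the $\gamma$-points of $f$ over a zero $\gamma$ of $P'$ are critical points of $P$, so $\sum_{P'(\gamma)=0}\overline N(r,\gamma;f)$ is absorbed into the ramification term of $P(f)$. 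Feeding these into the Second Main Theorem for $F$ and $G$, and into the Second Main Theorem for $f$ applied to $\alpha_{1},\dots,\alpha_{n}$ (whose total reduced counting function is $\overline N(r,1;F)$) and to the $\Gamma_{1}$ distinct zeros of $P$, and adding the analogous inequality in $g$, one reaches an estimate incompatible with the hypotheses $n>2k+5$ and $n>2\Gamma_{1}$ (with $\Gamma_{1}\geq3$, $k\geq2$). I expect this bookkeeping to be the main obstacle: every contribution to the poles of $\mathcal{H}$ and to the ramification of $P(f)$ must be apportioned correctly in terms of $k$ and $\Gamma_{1}$ --- not $\Gamma_{2}$, whose use in \eqref{bt1}--\eqref{bt2} caused the gap in \cite{SK,SS} --- and the passage from weight $2$ to IM sharing of $S_{1}$ is precisely what forces the extra $3$.

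Once $\mathcal{H}\equiv0$, integrating twice gives a bilinear relation $F=\dfrac{aG+b}{cG+d}$ with $ad-bc\neq0$, whence $T(r,F)=T(r,G)+O(1)$, $nT(r,f)=nT(r,g)+O(1)$, and $\rho(f)=\rho(g)$ is non-integer and finite. Running through the standard sub-cases according to which of $a,b,c,d$ vanish, every possibility other than $F\equiv G$ forces one of $F,G$ to have a pole at each zero of $P(g)$ or of $P(f)$, or at each of its $1$-points; since $F,G\in M_{1}(\mathbb{C})$ those points are then finite in number, and the Second Main Theorem applied to the corresponding function with the $\Gamma_{1}\ (\geq 3)$ distinct zeros of $P$ and with $\infty$ forces it to be rational, hence of order $0$ --- an integer, against the hypothesis. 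Hence $F\equiv G$, i.e.\ $P(f)\equiv P(g)$, equivalently $Q(f)\equiv Q(g)$, that is $\prod_{i=1}^{n}(f-\alpha_{i})\equiv\prod_{i=1}^{n}(g-\alpha_{i})$.

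It remains to deduce $f\equiv g$; suppose $f\not\equiv g$. If $f(z_{0})\in S_{2}$ then $g(z_{0})\in S_{2}$ (IM sharing of $S_{2}$) and $Q(f(z_{0}))=Q(g(z_{0}))$, so by $Q(\beta_{1})=\prod_{i}(\beta_{1}-\alpha_{i})\neq\prod_{i}(\beta_{2}-\alpha_{i})=Q(\beta_{2})$ we get $f(z_{0})=g(z_{0})$; thus $f=g$ at every $S_{2}$-point, and writing $P(w)-P(\beta_{j})=(w-\beta_{j})^{\mu_{j}}H_{j}(w)$ with $H_{j}(\beta_{j})\neq0$, the identity $P(f)\equiv P(g)$ forces $\mathrm{ord}_{z_{0}}(f-\beta_{j})=\mathrm{ord}_{z_{0}}(g-\beta_{j})$ at each such $z_{0}$. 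Hence $\varphi:=\dfrac{(f-\beta_{1})(f-\beta_{2})}{(g-\beta_{1})(g-\beta_{2})}$ has neither zeros nor poles except at the finitely many poles of $f,g$, so $\varphi=e^{h}r$ with $h$ entire and $r$ rational; moreover $T(r,\varphi)=O(T(r,f))$, so $\varphi$ is of finite order and $h$ is a polynomial. Substituting the branch of $g$ determined by $(g-\beta_{1})(g-\beta_{2})=\varphi^{-1}(f-\beta_{1})(f-\beta_{2})$ into $P(g)\equiv P(f)$ yields a nontrivial algebraic relation between $f$ and $\varphi$, so if $\deg h\geq1$ then $\rho(f)=\rho(\varphi)=\deg h$ would be an integer, a contradiction; hence $h$ is constant, and one checks, using $P(f)\equiv P(g)$ together with $n>2\Gamma_{1}$, that the resulting constant $\varphi\equiv c$ must equal $1$. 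Then $(f-\beta_{1})(f-\beta_{2})\equiv(g-\beta_{1})(g-\beta_{2})$, i.e.\ $f\equiv g$ or $f+g\equiv\beta_{1}+\beta_{2}$; the latter would give $P(z)\equiv P(\beta_{1}+\beta_{2}-z)$, hence $n$ even and $\{\alpha_{1},\dots,\alpha_{n}\}=\{\beta_{1}+\beta_{2}-\alpha_{i}\}_{i}$, which yields $Q(\beta_{2})=Q(\beta_{1})$ --- contradicting the hypothesis. Finally, the possibility that $f$ omits $S_{2}$ entirely is excluded directly: writing $f$ in lowest terms as $h_{0}/p_{0}$ with $h_{0}$ entire and $p_{0}$ a polynomial, $h_{0}-\beta_{1}p_{0}$ and $h_{0}-\beta_{2}p_{0}$ are zero-free, hence equal to $e^{u},e^{v}$, and $e^{u}-e^{v}=(\beta_{2}-\beta_{1})p_{0}$ being a polynomial forces $u-v$ constant and then $f$ constant --- impossible for a function of non-integer order. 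Therefore $f\equiv g$.
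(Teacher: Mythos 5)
Your overall architecture coincides with the paper's more closely than it may appear: your auxiliary function $\mathcal{H}$ is literally the paper's $H=\frac{F''}{F'}-\frac{G''}{G'}$ after the substitution $F=1/Q(f)=1/\big(\mu(P(f)-1)\big)$, since $\frac{d}{dz}\log\frac{d}{dz}\frac{1}{\mu(P(f)-1)}=\frac{(P(f))''}{(P(f))'}-\frac{2(P(f))'}{P(f)-1}$; the $1$-points of $P(f)$ become the poles of $1/Q(f)$, the dichotomy $\mathcal{H}\equiv0$ or not, the integration to a M\"{o}bius/affine relation, and the replacement of the faulty inequalities \eqref{bt1}--\eqref{bt2} by counting through the $k$ distinct zeros of $P'$ are all the same ideas as in the paper.

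There are, however, two genuine gaps. First, in the case $\mathcal{H}\not\equiv0$ you never carry out the counting that produces the threshold $2k+5$; you explicitly defer ``the bookkeeping,'' but that bookkeeping is the entire content of the theorem --- it is exactly where \cite{SK, SS} went wrong and where the hypothesis $n>2k+5$ is earned. Moreover, your plan to absorb $\sum_{P'(\gamma)=0}\overline N(r,\gamma;f)$ into the ramification term of $P(f)$ is not what the paper does: it applies the second fundamental theorem to $f$ itself with the $n+k$ targets $\alpha_{1},\dots,\alpha_{n},\lambda_{1},\dots,\lambda_{k}$ and pays for the terms $\sum_{j}\big(\overline N(r,\lambda_{j};f)+\overline N(r,\lambda_{j};g)\big)$ appearing in $N(r,\infty;H)$ out of the left-hand side $(n+k-1)(T(r,f)+T(r,g))$, which is how the constant $2k+5$ arises; it is not clear your variant yields the same bound. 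Second, your endgame from $Q(f)\equiv Q(g)$ to $f\equiv g$ is not a proof: ``substituting the branch of $g$'' into $P(g)\equiv P(f)$ is not a well-defined operation for a multivalued algebraic function, the inference that a nontrivial algebraic relation forces $\rho(f)=\rho(\varphi)=\deg h$ is unjustified, and ``one checks that $c=1$'' is not checked. The paper's route here is both shorter and complete: $Q(f)\equiv Q(g)$, the hypothesis $Q(\beta_{1})\neq Q(\beta_{2})$ and IM sharing of $S_{2}$ show that $f$ and $g$ share $\beta_{1}$, $\beta_{2}$ and $\infty$ CM, after which Lemma \ref{lem2} (Theorem 2.19 of \cite{YY}: two meromorphic functions of non-integer finite order sharing three values CM are identical) finishes the argument. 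Your M\"{o}bius subcase analysis is likewise only sketched (for instance the subcase $b=0$, $d=a-c$, where the exceptional value of $G$ coincides with the shared value $1$, needs separate treatment), whereas the paper's affine normalization $1/Q(f)=c_{0}/Q(g)+c_{1}$ reduces everything to two clean subcases settled by $n>2\Gamma_{1}$ and $\Gamma_{1}\geq3$.
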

In Theorem E, one can observe that the elements of $S_{2}$ are the zeros of $\emph{p}'(z)$, where $$\emph{p}(z)=\frac{(n-1)(n-2)}{2}z^{n}-n(n-2)z^{n-1}+\frac{n(n-1)}{2}z^{n-2}+1.$$
This observation motivates us to write the next two theorems.
\begin{theo}\label{thb1.3}
Let $f,g\in M_{1}(\mathbb{C})$ and $S_{1}=\{\alpha_1,\alpha_2,\ldots,\alpha_n\}$, $S_{2}=\{\beta_{1},\beta_{2}\}$, where $\alpha_1,\alpha_2,\ldots,\alpha_n, \beta_{1},\beta_{2}$ are $n+2$ distinct non-zero complex constants satisfying $n>\max\{4, 2\Gamma_{1}$\}, and $P'(z)$ has exactly two distinct zeros $\beta_{1}$ and $\beta_{2}$ and $\Gamma_{1}\geq 3$. If $f$ and $g$ share $(S_{1},2)$ and $S_{2}$ IM, then $f\equiv g$, provided
$$(\beta_{1}-\alpha_{1})(\beta_{1}-\alpha_{2})\ldots(\beta_{1}-\alpha_{n})\not=(\beta_{2}-\alpha_{1})(\beta_{2}-\alpha_{2})\ldots(\beta_{2}-\alpha_{n})$$
and $f$ is of non-integer finite order.
\end{theo}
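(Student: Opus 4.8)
The plan is to run the standard ``two shared sets'' machinery with $F:=P(f)$ and $G:=P(g)$, $P$ the polynomial of the statement, squeezing the extra structure coming from the hypothesis that $S_2$ is exactly the critical set of $P$. First I would record the reductions and the rigidity this forces. Since $Q(z)=(-1)^{n+1}\alpha_1\cdots\alpha_n\{P(z)-1\}$ and an element of $E_f(S_1,2)$ is a $1$--point of $P(f)$ counted with weight $2$, the hypothesis ``$f,g$ share $(S_1,2)$'' is exactly ``$F,G$ share $(1,2)$''; writing $P'(z)=cn(z-\beta_1)^{l_1}(z-\beta_2)^{l_2}$ (leading coefficient $c$, $l_1+l_2=n-1$, $l_1,l_2\ge1$), the hypothesis on $P'$ means $S_2$ is the full critical set of $P$, and ``$f,g$ share $S_2$ IM'' reads $\overline{E}_f(S_2)=\overline{E}_g(S_2)$. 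A multiple zero of $P$ is a common zero of $P$ and $P'$, hence lies in $\{\beta_1,\beta_2\}$, and at most one of $\beta_1,\beta_2$ can be a zero of $P$ (the two multiplicities as zeros of $P$ would otherwise sum to more than $n$); if neither were, $P$ would have $n$ simple zeros and $\Gamma_1=n$, contradicting $n>2\Gamma_1$. So exactly one, say $\beta_1$, is a zero of $P$, of multiplicity $l_1+1$; $0$ is a simple zero of $P$; $\Gamma_1=n-l_1$, so $l_1>n/2$; and the critical values $b_1:=P(\beta_1)=0$ and $b_2:=P(\beta_2)$ satisfy $b_1\ne b_2\ne1$ (hence the product condition — equivalent to $b_1\ne b_2$ — holds automatically here), while every finite fibre of $P$ has at least $\min\{\Gamma_1,l_1+1,n\}=\Gamma_1\ge3$ points.

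Next put $H:=\bigl(\tfrac{F''}{F'}-\tfrac{2F'}{F-1}\bigr)-\bigl(\tfrac{G''}{G'}-\tfrac{2G'}{G-1}\bigr)$ and split into $H\not\equiv0$ and $H\equiv0$. In the first case the weight--$2$ sharing of the $1$--set makes the poles of $H$ simple and confined to zeros of $F'$ off the $1$--set, to common poles of $F,G$, and to finitely many exceptional $1$--points, yielding the usual Nevanlinna bound for $T(r,F)+T(r,G)$ in terms of reduced counting functions. To pass back to $f,g$ one needs the \emph{corrected} forms of (\ref{bt1})--(\ref{bt2}): since $P^{-1}(0)$ consists of $\{0,\beta_1\}$ and $\Gamma_1-2$ further simple zeros of $P$,
\be
\overline{N}(r,0;P(f))=\sum_{w\in P^{-1}(0)}\overline{N}(r,w;f),\qquad N_2(r,0;P(f))=2\,\overline{N}(r,\beta_1;f)+\sum_{w}N_2(r,w;f),
\ee
the last sum ranging over the simple zeros $w$ of $P$; moreover the zeros of $P'(f)$ — the $\beta_1$-- and $\beta_2$--points of $f$ — form exactly $\overline{E}_f(S_2)=\overline{E}_g(S_2)$ and so contribute equally to the estimates for $f$ and for $g$. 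Combining this with $\overline{N}(r,\infty;F)=\overline{N}(r,\infty;f)=S(r,f)$ (because $f\in M_1(\mathbb{C})$) and $T(r,F)=n\,T(r,f)+O(1)$, everything collapses to an inequality of the shape $\{n-2\Gamma_1-O(1)\}\bigl(T(r,f)+T(r,g)\bigr)\le S(r,f)+S(r,g)$, the $O(1)$ correction being exactly what the IM--sharing of the critical set $S_2$ cancels; since $n>\max\{4,2\Gamma_1\}$ with $\Gamma_1\ge3$, the coefficient is positive, an absurdity. Hence $H\equiv0$.

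Integrating $H\equiv0$ twice gives $\tfrac1{F-1}=\tfrac{bG+a-b}{G-1}$ with $a\ne0$, so $F$ is a M\"obius function of $G$. If $b\ne0$ this transform carries $\infty$ to a finite value, and $F=P(f)$ then has a pole at each point over some finite $\mu$ (over $0$ in the instance $FG\equiv1$); but $f\in M_1(\mathbb{C})$ has only finitely many poles, so $g$ omits, off a finite set, all $\ge3$ points of $P^{-1}(\mu)$, contradicting the second main theorem for $g$ (Picard's theorem if $F$ has no poles). The only survivor besides $F\equiv G$ is the affine relation $F=\tfrac1a G+\tfrac{a-1}{a}$ with $a\ne0,1$; this I would eliminate by the usual refinement of the $H\equiv0$ lemma under $(1,2)$--sharing, using $\overline{N}(r,\infty;F)=\overline{N}(r,\infty;G)=S$ together with the heavy ramification of $P$ over its two critical values $0,b_2$ (again $n>\max\{4,2\Gamma_1\}$ closes the estimate), or, in the borderline configuration, by noting that it forces $g=Re^{\psi}$ with $R$ rational and $\psi$ entire of finite order, hence $\psi$ a polynomial and $\rho(g)=\deg\psi\in\mathbb{Z}$ — impossible since $\rho(g)=\rho(f)$. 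Therefore $F\equiv G$, i.e.\ $P(f)\equiv P(g)$, equivalently $Q(f)\equiv Q(g)$.

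For the endgame, fix a $\beta_1$--point $z_0$ of $f$. By the $S_2$--sharing $g(z_0)\in\{\beta_1,\beta_2\}$, and $P(g(z_0))=P(f(z_0))=0\ne b_2$ rules out $\beta_2$, so $g(z_0)=\beta_1$; comparing the orders of vanishing of $P(f)\equiv P(g)$ at $z_0$ — a $\beta_j$--point of order $p$ makes $P(\cdot)-b_j$ vanish to order $(l_j+1)p$ — the multiplicities match, and likewise $f,g$ share their (finitely many) poles with multiplicity. Consequently $\dfrac{f-\beta_1}{g-\beta_1}$ and $\dfrac{f-\beta_2}{g-\beta_2}$ are entire, zero--free, and of finite order, hence equal $e^{\gamma_1}$ and $e^{\gamma_2}$ with $\gamma_1,\gamma_2$ polynomials. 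If $e^{\gamma_1}\not\equiv e^{\gamma_2}$, solving the two relations gives $g=\dfrac{(\beta_2-\beta_1)+\beta_1e^{\gamma_1}-\beta_2e^{\gamma_2}}{e^{\gamma_1}-e^{\gamma_2}}$; at a zero of $e^{\gamma_1}-e^{\gamma_2}$ the numerator is $(\beta_2-\beta_1)(1-e^{\gamma_1})$, vanishing only where $e^{\gamma_1}=1$, so if $\gamma_1-\gamma_2$ were non-constant, $g$ would acquire infinitely many poles — impossible. Thus $\gamma_1-\gamma_2\equiv d$ with $e^d\ne1$, whence $g=Ae^{-\gamma_2}+B$ with $A\ne0$, forcing $\rho(g)=\deg\gamma_2\in\mathbb{Z}$, contradicting that $\rho(f)=\rho(g)$ is finite and non-integral. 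Hence $e^{\gamma_1}\equiv e^{\gamma_2}$; subtracting the two relations gives $(\beta_2-\beta_1)(e^{\gamma_1}-1)\equiv0$, so $e^{\gamma_1}\equiv1$ and $f\equiv g$. The step I expect to be the main obstacle is the second paragraph — producing a correct and sufficiently tight replacement for the flawed inequalities (\ref{bt1})--(\ref{bt2}) and interlocking it with the IM--sharing of $S_2$ so that the coefficient of $T(r,f)+T(r,g)$ stays positive under $n>\max\{4,2\Gamma_1\}$ — with, secondarily, the exclusion of the affine residual relation in the third; the remaining case bookkeeping is routine, and the non-integrality of $\rho(f)$ enters decisively only in the fourth.
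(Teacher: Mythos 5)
Your overall architecture (an auxiliary function $H$, the dichotomy $H\not\equiv 0$ versus $H\equiv 0$, and an endgame that uses the non-integrality of the order) matches the paper, and your final step from $Q(f)\equiv Q(g)$ to $f\equiv g$ is a correct direct argument for what the paper delegates to its Lemma on three CM-shared values. But the two steps that carry all of the difficulty are left as placeholders you yourself flag, and the first is a genuine gap, not routine bookkeeping. You take $F=P(f)$, $G=P(g)$ with the standard $H$ containing the $2F'/(F-1)$ terms, and assert that Case I "collapses to" an inequality $\{n-2\Gamma_{1}-O(1)\}(T(r,f)+T(r,g))\le S(r)$, the $O(1)$ being "cancelled by the IM-sharing of $S_2$". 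Nothing in the sketch substantiates this: the standard lemma for $(1,2)$-shared $1$-points bounds $\frac{n}{2}(T(r,f)+T(r,g))$ by $N_{2}(r,0;F)+N_{2}(r,0;G)+\cdots$, and the corrected form of the flawed inequality $N_{2}(r,0;P(f))\le\Gamma_{2}\overline{N}(r,0;f)$ is $N_{2}(r,0;P(f))\le\Gamma_{2}T(r,f)$; on its face your route therefore needs something like $n>2\Gamma_{2}$ (the hypothesis of Theorem H), and you never show how the IM-sharing of the critical set buys this down to $n>\max\{4,2\Gamma_{1}\}$. The paper sidesteps the issue entirely by taking $F=1/Q(f)$, $G=1/Q(g)$ and $H=F''/F'-G''/G'$: the poles of $H$ are then controlled by the zeros of $P'(f)$ and $P'(g)$, which under the hypothesis of this theorem are exactly the $\beta_j$-points of $f$ and $g$ and so are counted only once because $S_2$ is shared IM; applying the second fundamental theorem to $f$ and $g$ with the $n+2$ values $\alpha_i,\beta_j$ gives $(n+1)(T(r,f)+T(r,g))\le(3+\frac{n}{2})(T(r,f)+T(r,g))+O(\ln r)+S(r)$, and only $n>4$ is needed in Case I.

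The second gap is in $H\equiv 0$: your genuinely M\"obius case is plausibly handled, but the affine relation is dismissed "by the usual refinement \dots or, in the borderline configuration, by noting that it forces $g=Re^{\psi}$" --- neither alternative is carried out, and this is precisely where $n>2\Gamma_{1}$ and $\Gamma_{1}\ge 3$ must do their work. The paper's version integrates once to $1/Q(f)\equiv c_{0}/Q(g)+c_{1}$ and kills the subcases $c_{1}\ne 0$ and $c_{0}\ne 1$ by two explicit applications of the second fundamental theorem (the first exploiting the ramification of $P$ over $0$ together with $\Gamma_{1}\ge 3$, the second using $n>2\Gamma_{1}$). Your structural observations about $P$ (exactly one of $\beta_1,\beta_2$ is a multiple zero of $P$, the product condition being automatic) are correct and not in the paper, but until the Case-I counting argument for your choice of $F,G,H$ and the elimination of the affine residue are actually supplied, the proposal is an outline rather than a proof.
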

\begin{theo}\label{thb1.4}
Let $f,g\in M_{1}(\mathbb{C})$ and $S_{1}=\{\alpha_1,\alpha_2,\ldots,\alpha_n\}$, $S_{2}=\{\beta_{1},\beta_{2}\}$, where $\alpha_1,\alpha_2,\ldots,\alpha_n, \beta_{1},\beta_{2}$ are $n+2$ distinct non-zero complex constants satisfying $n>\max\{7, 2\Gamma_{1}$\}, and $P'(z)$ has exactly two distinct zeros $\beta_{1}$ and $\beta_{2}$ and $\Gamma_{1}\geq 3$. If $f$ and $g$ share $S_{1}$ and $S_{2}$ IM, then $f\equiv g$, provided
$$(\beta_{1}-\alpha_{1})(\beta_{1}-\alpha_{2})\ldots(\beta_{1}-\alpha_{n})\not=(\beta_{2}-\alpha_{1})(\beta_{2}-\alpha_{2})\ldots(\beta_{2}-\alpha_{n})$$
and $f$ is of non-integer finite order.
\end{theo}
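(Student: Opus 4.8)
\emph{Plan of the proof of Theorem \ref{thb1.4}.} Put $F:=P(f)$ and $G:=P(g)$. From the definition of $P$ we have $P(z)-1=\big((-1)^{n+1}\alpha_1\cdots\alpha_n\big)^{-1}\prod_{i=1}^{n}(z-\alpha_i)$, so $z_0\in\overline E_f(S_1)$ if and only if $F(z_0)=1$, the multiplicity of that $1$-point of $F$ being the multiplicity of the corresponding $\alpha_i$-point of $f$; hence ``$f,g$ share $S_1$ IM'' is the same as ``$F,G$ share $1$ IM''. Since $f,g\in M_1(\mathbb C)$ and $f$ is of positive finite order, $\overline N(r,\infty;f)=O(\log r)$ and likewise for $g$, so all pole terms of $f,g,F,G$ are $S(r)$; also $T(r,F)=nT(r,f)+S(r)$ and $T(r,G)=nT(r,g)+S(r)$. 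Applying the second fundamental theorem to $f$ with the targets $\alpha_1,\dots,\alpha_n$ gives $(n-1)T(r,f)\le\overline N(r,1;F)+S(r)=\overline N(r,1;G)+S(r)\le nT(r,g)+S(r)$, and symmetrically; thus $T(r,f)\asymp T(r,g)$, so $g$ is transcendental and of the same non-integer finite order as $f$, and we abbreviate all error terms by $S(r)$. Finally, since the zeros of $F$ lie over the $\Gamma_1$ distinct zeros of $P$, the corrected estimate $\overline N(r,0;F)=\sum_{\gamma:\,P(\gamma)=0}\overline N(r,\gamma;f)\le\Gamma_1T(r,f)+S(r)$ holds, and similarly for $G$; hence $n>2\Gamma_1$ forces $\overline N(r,0;F)<\tfrac12T(r,F)+S(r)$ and $\overline N(r,0;G)<\tfrac12T(r,G)+S(r)$.

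Now put
\[
H:=\Big(\frac{F''}{F'}-\frac{2F'}{F-1}\Big)-\Big(\frac{G''}{G'}-\frac{2G'}{G-1}\Big)
\]
and argue by cases. Suppose $H\not\equiv0$. The poles of $H$ occur only at the zeros and poles of $F$ and $G$, at the zeros of $F'$ and $G'$ lying off the zeros and $1$-points of $F,G$, and at the $1$-points where $F$ and $G$ have unequal multiplicity; with $N(r,1;F|=1)\le N(r,H)+S(r)$ and the second fundamental theorem for $F$ and for $G$ at $0,1,\infty$ this yields an inequality of the shape $(n-c)\,T(r,f)\le S(r)$. The special hypothesis enters in bounding $c$: writing $P'(z)=\lambda(z-\beta_1)^{s}(z-\beta_2)^{t}$ with $s+t=n-1$, we get $F'=\lambda(f-\beta_1)^{s}(f-\beta_2)^{t}f'$ and $G'=\lambda(g-\beta_1)^{s}(g-\beta_2)^{t}g'$, so the ``extra'' zeros of $F'$ and $G'$ come only from the $S_2$-points and the multiple points of $f$ and $g$; because $f,g$ share $S_2=\{\beta_1,\beta_2\}$ IM these lie over the same points, and at those where $f$ and $g$ assume the same value of $S_2$ with equal multiplicity the two halves of $H$ have equal residue and cancel, so such points do not contribute to $N(r,H)$ at all. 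Carrying this through (with $k=2$) shows $c\le 7$, so $n>\max\{7,2\Gamma_1\}$ makes $H\not\equiv0$ impossible; this is where the bound improves on the $n>2k+5=9$ of Theorem \ref{thb1.2}, and I expect this delicate counting — in particular keeping the weight-zero term $\overline N_*(r,1;F,G)$ under control as well — to be the main obstacle.

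Hence $H\equiv0$. Integrating twice gives $\dfrac{1}{F-1}=\dfrac{A}{G-1}+B$ for constants $A(\ne0),B$, a M\"obius relation between $F$ and $G$. The usual subcase analysis eliminates everything but $F\equiv G$: the ``reciprocal'' case $FG\equiv1$ would make $g$ assume, up to finitely many points, each of the $\ge3$ distinct zeros of $P$ only finitely often, contradicting $\Gamma_1\ge3$ together with $T(r,g)\to\infty$; the remaining linear and bilinear alternatives force $F$ or $G$, up to a rational factor, to equal $e^{\text{polynomial}}$, hence $f$ or $g$ to be of integer order, which is excluded; throughout, the non-degeneracy hypothesis $\prod_i(\beta_1-\alpha_i)\ne\prod_i(\beta_2-\alpha_i)$, i.e.\ $P(\beta_1)\ne P(\beta_2)$, prevents the critical values of $P$ from coinciding. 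Therefore $P(f)\equiv P(g)$.

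Finally we pass from $P(f)\equiv P(g)$ to $f\equiv g$. As $P(\beta_1)\ne P(\beta_2)$, the equation $P(w)=P(\beta_1)$ has $w=\beta_1$ as its only multiple root (of multiplicity $s+1$) and $P(w)=P(\beta_2)$ has $w=\beta_2$ as its only multiple root (of multiplicity $t+1$); combining this with $P(f)\equiv P(g)$ and the fact that $f,g$ share $S_2$ IM shows that $g=\beta_1$ with the same multiplicity at every $\beta_1$-point of $f$, and $g=\beta_2$ with the same multiplicity at every $\beta_2$-point of $f$, i.e.\ $f$ and $g$ share $\beta_1$ and $\beta_2$ CM. Since $P(f)\equiv P(g)$ also gives $f$ and $g$ the same poles with the same multiplicities and $f,g\in M_1(\mathbb C)$ are of finite order, $\frac{f-\beta_1}{g-\beta_1}=e^{\gamma_1}$ and $\frac{f-\beta_2}{g-\beta_2}=e^{\gamma_2}$ for some polynomials $\gamma_1,\gamma_2$. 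Subtracting $f-\beta_1=e^{\gamma_1}(g-\beta_1)$ from $f-\beta_2=e^{\gamma_2}(g-\beta_2)$ and solving for $g$ exhibits $g$ as a rational function of $e^{\gamma_1}$ and $e^{\gamma_2}$ unless $e^{\gamma_1}\equiv e^{\gamma_2}$; the former would make $g$ of integer (or zero) order, against hypothesis, so $e^{\gamma_1}\equiv e^{\gamma_2}$, and then the same computation gives $\beta_2-\beta_1=e^{\gamma_1}(\beta_2-\beta_1)$, i.e.\ $e^{\gamma_1}\equiv1$, whence $f\equiv g$.
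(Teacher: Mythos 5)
Your overall route coincides with the paper's: up to the substitution $F\mapsto \frac{1}{\mu(F-1)}=\frac{1}{Q(f)}$ your auxiliary function $H$ is literally the same one the paper uses, the dichotomy $H\not\equiv 0$ versus $H\equiv 0$ is the same, and your final passage from $P(f)\equiv P(g)$ to $f\equiv g$ (CM sharing of $\beta_1$, $\beta_2$, $\infty$ plus the two-exponential computation) is a correct and more explicit version of the paper's appeal to Lemma \ref{lem2}. But two of your three blocks are not actually proved. First, the Case-I counting, which is the only possible source of the threshold $n>7$, is asserted (``$c\le 7$'') rather than derived, and you yourself flag it as the main obstacle. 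The paper obtains it by combining the IM-sharing correction of Theorem \ref{thb1.2}, namely $\ol{N}_{\ast}(r,\infty;F,G)\leq N(r,0;f)+N(r,0;g)+O(\ln r)+S(r)$ (worth an extra $\tfrac{3}{2}(T(r,f)+T(r,g))$), with the identification of the zeros of $P'$ with the IM-shared set $S_{2}$ from Theorem \ref{thb1.3}; this turns the count into $(n+1)\leq 3+\tfrac{n}{2}+\tfrac{3}{2}$ and gives the contradiction precisely when $n>7$. Without this computation the main hypothesis of the theorem is never used.

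Second, and more seriously, your treatment of $H\equiv 0$ misidentifies the cases. Integration gives $\frac{1}{Q(f)}\equiv\frac{c_0}{Q(g)}+c_1$; the alternatives are $c_1\neq 0$ and $c_1=0$, and neither is a ``reciprocal case $FG\equiv 1$''. In the linear case $c_1=0$, $c_0\neq 1$, i.e.\ $Q(g)\equiv c_0Q(f)$, nothing forces $F$ or $G$ to be $e^{\text{polynomial}}$ times a rational function, so your ``integer order'' elimination does not apply; the correct mechanism is the second fundamental theorem applied to $P(f)$ at $0$, $1-\frac{1}{c_0}$, $\infty$, which yields $nT(r,f)\leq 2\Gamma_{1}T(r,f)+O(\ln r)+S(r)$ and contradicts $n>2\Gamma_{1}$. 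In the case $c_1\neq 0$ one gets $\ol{N}(r,-c_0/c_1;Q(g))=O(\ln r)$ and applies the second fundamental theorem to $Q(g)$ at $\infty$, $-\mu$, $-c_0/c_1$; only in the degenerate sub-case $c_0/c_1=\mu$ does your idea that the $\geq 3$ zeros of $P$ are taken only with high multiplicity enter, and there it is applied to $g$ (every $b_j$-point of $g$ has multiplicity at least $n$, every $c_i$-point at least $2$), giving a contradiction with $\Gamma_{1}\geq 3$. So the hypotheses $n>2\Gamma_{1}$ and $\Gamma_{1}\geq 3$ are each needed in a specific sub-case that your sketch, as written, does not reach.
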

\section{Necessary Lemmas}
\begin{lem}\label{lem1} (\cite{2C, Ch})
Let $f,g\in M(\mathbb{C})$ and $f$ and $g$ share the set $\{\beta_{1},\beta_{2}\}$ IM, where $\beta_{1}\not=\beta_{2}$ and $\beta_{1},\beta_{2}\in\mathbb{C}$. Then $\rho(f)=\rho(g)$.
\end{lem}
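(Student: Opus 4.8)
The plan is to convert the set–sharing hypothesis into a comparison of characteristic functions and then read off the orders. Put $S=\{\beta_1,\beta_2\}$. The assumption $\overline{E}_f(S)=\overline{E}_g(S)$ means precisely that the $\beta_1$- and $\beta_2$-points of $f$, counted without multiplicity, form the same point set as those of $g$; equivalently, the zeros of $(f-\beta_1)(f-\beta_2)$ and of $(g-\beta_1)(g-\beta_2)$ coincide as sets. Since $\beta_1\neq\beta_2$, the $\beta_1$-points and $\beta_2$-points of a given function are disjoint, so recording this with reduced counting functions yields
\[
\overline{N}(r,\beta_1;f)+\overline{N}(r,\beta_2;f)=\overline{N}(r,\beta_1;g)+\overline{N}(r,\beta_2;g).
\]
Because $\rho(f)$ and $\rho(g)$ are the $\limsup$'s of $\ln T(r,\cdot)/\ln r$, it is enough to prove $\rho(g)\le\rho(f)$; interchanging the roles of $f$ and $g$ then gives the reverse inequality, and hence equality.

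To obtain $\rho(g)\le\rho(f)$ I would invoke Nevanlinna's Second Fundamental Theorem for $g$ with the three targets $\beta_1,\beta_2,\infty$, namely
\[
T(r,g)\le \overline{N}(r,\beta_1;g)+\overline{N}(r,\beta_2;g)+\overline{N}(r,\infty;g)+S(r,g).
\]
Feeding in the sharing identity together with the trivial bounds $\overline{N}(r,\beta_i;f)\le T(r,f)+O(1)$ controls the two finite–value terms by $2\,T(r,f)+O(1)$, so that
\[
T(r,g)\le 2\,T(r,f)+\overline{N}(r,\infty;g)+S(r,g),
\]
and, symmetrically, $T(r,f)\le 2\,T(r,g)+\overline{N}(r,\infty;f)+S(r,f)$. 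Here $S(r,g)=o(T(r,g))$ and $S(r,f)=o(T(r,f))$ outside a set of finite linear measure, so the error terms do not influence the order.

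The hard part, and the genuine content of the lemma, is the pole term $\overline{N}(r,\infty;g)$ together with its analogue for $f$. The sharing hypothesis pins down only the $\beta_1,\beta_2$-points and says nothing directly about the poles, so in principle an unconstrained pole term could raise $T(r,g)$, and hence $\rho(g)$, above $\rho(f)$. A natural line of attack is to pass to $1/(g-\beta_1)$, whose zeros are exactly the poles of $g$ while its $\infty$-points are the shared $\beta_1$-points, and to apply the Second Fundamental Theorem there; but this merely reproduces the same pole counting function, which is why the estimate is delicate. The crux is therefore to show that the poles cannot inflate the order, i.e. that $\overline{N}(r,\infty;g)$ is of order at most $\max\{\rho(f),\rho(g)\}$ and enters symmetrically on both sides. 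Granting this, the two displayed inequalities upgrade to the mutual domination $T(r,g)\le C\,T(r,f)+S(r,g)$ and $T(r,f)\le C\,T(r,g)+S(r,f)$ with an absolute constant $C$; then $T(r,f)$ and $T(r,g)$ have the same growth order and $\rho(f)=\rho(g)$ follows at once. I would secure the needed bound on the pole counting functions by controlling the poles of each function against those of the other through a symmetric treatment of the target $\infty$, and I expect exactly this pole control — not the characteristic–function bookkeeping above — to be the step demanding real care.
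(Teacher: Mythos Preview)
The paper does not give its own proof of this lemma; it merely quotes the statement from \cite{2C,Ch}. So there is no in--paper argument to compare against, and your proposal has to be judged on its own.

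Your write--up is honest about where the difficulty lies, but it does not close the gap. After the Second Fundamental Theorem and the sharing identity you arrive at
\[
T(r,g)\le 2\,T(r,f)+\overline{N}(r,\infty;g)+S(r,g),
\]
and symmetrically for $f$. The only universal bound available for the pole term is $\overline{N}(r,\infty;g)\le T(r,g)+O(1)$, which makes the displayed inequality vacuous. Your proposed fix, a ``symmetric treatment of the target $\infty$'', cannot work as stated: the hypothesis says nothing whatsoever about poles, so there is no relation between $\overline{N}(r,\infty;f)$ and $\overline{N}(r,\infty;g)$ to exploit. In particular, nothing you have written prevents one function from having order driven entirely by its poles while the other has few poles. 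This is the missing idea, not a routine detail, and the sentence beginning ``Granting this'' is precisely where the argument stops being a proof.

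Two remarks that may help you repair or reposition the argument. First, in every application the present paper makes of this lemma one has $f,g\in M_{1}(\mathbb{C})$, so that $\overline{N}(r,\infty;f)$ and $\overline{N}(r,\infty;g)$ are $O(\ln r)$; under that extra assumption your argument goes through immediately and yields $\rho(f)=\rho(g)$. Second, for the lemma in the generality stated (arbitrary $f,g\in M(\mathbb{C})$) you will need an input beyond the three--value Second Fundamental Theorem with $\beta_1,\beta_2,\infty$; consult \cite{2C,Ch} for the device Chen actually uses to neutralise the pole contribution.
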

\begin{lem}\label{lem2}
Let $f,g$ be two non-constant meromorphic functions and $a_1,a_2$ be two distinct finite complex numbers. If $f$ and $g$ share $a_{1}$, $a_{2}$ and $\infty$ CM, then $f\equiv g$, provided that $f$ is of non-integer finite order.
\end{lem}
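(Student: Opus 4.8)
The plan is to normalize the three shared values to $0,1,\infty$ and then to let the finiteness together with the non-integrality of $\rho(f)$ force the rigidity $f\equiv g$. First I would post-compose both functions with the affine map $L(w)=(w-a_{1})/(a_{2}-a_{1})$, which carries $a_{1},a_{2},\infty$ to $0,1,\infty$, is locally biholomorphic (so preserves multiplicities), preserves the order, and satisfies $L\circ f\equiv L\circ g$ if and only if $f\equiv g$; hence there is no loss in assuming that $f$ and $g$ share $0$, $1$ and $\infty$ CM. Since they then share the two-point set $\{0,1\}$ IM, Lemma \ref{lem1} gives $\rho(g)=\rho(f)=:\rho$, a finite non-integer; write $m:=\lfloor\rho\rfloor$, so $m<\rho<m+1$.

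Because $f$ and $g$ share $0$ and $\infty$ CM, the quotient $f/g$ is entire and zero-free, so $f/g=e^{\alpha}$ for some entire $\alpha$; similarly $(f-1)/(g-1)=e^{\beta}$ for some entire $\beta$. From $T(r,e^{\alpha})\le T(r,f)+T(r,g)+O(1)$, and likewise for $e^{\beta}$, both $e^{\alpha}$ and $e^{\beta}$ have order $\le\rho<\infty$; since an entire function whose exponential has finite order is necessarily a polynomial (of degree equal to that order), $\alpha$ and $\beta$ are polynomials and $\deg\alpha,\deg\beta$ are integers not exceeding $\rho$, hence $\le m$. Eliminating $g$ from $g=fe^{-\alpha}$ and $f-1=(g-1)e^{\beta}$ yields $f(1-e^{\beta-\alpha})=1-e^{\beta}$. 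If the left-hand factor is identically zero one checks immediately that $e^{\beta}\equiv1$, so $f\equiv g$; otherwise
$$f=\frac{1-e^{\beta}}{1-e^{\beta-\alpha}}.$$
Now $\beta$ and $\beta-\alpha$ are polynomials of degree $\le m$, so the numerator and denominator are entire of order $\le m$, whence $T(r,f)=O(r^{m})$ and $\rho(f)\le m$ --- contradicting $\rho(f)=\rho>m$. Therefore $f\equiv g$.

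The substantive inputs are only Lemma \ref{lem1} (to pass finiteness of the order from $f$ to $g$) and the elementary fact that $e^{\alpha}$, $e^{\beta}$ of finite order forces $\alpha$, $\beta$ to be polynomials of degree at most $\lfloor\rho(f)\rfloor$; the non-integrality of $\rho(f)$ is exactly what creates the gap between $\lfloor\rho(f)\rfloor$ and $\rho(f)$ that the ratio representation of $f$ cannot fill. I do not expect a genuine obstacle here: the only place needing a little attention is disposing of the degenerate possibilities (such as $\alpha$ or $\beta$ being constant, or $e^{\beta-\alpha}\equiv1$), but because only the crude estimate $\rho(f)\le m$ is ever used, one never has to analyse cancellations in the quotient $(1-e^{\beta})/(1-e^{\beta-\alpha})$, which keeps the argument short.
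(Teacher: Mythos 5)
Your proof is correct, but it takes a genuinely different route from the paper: the paper disposes of this lemma in one line, invoking Lemma \ref{lem1} to transfer the finite non-integer order from $f$ to $g$ and then citing Theorem 2.19 of \cite{YY} (the classical result that two distinct meromorphic functions sharing three values CM must have order equal to a positive integer or infinity). What you have done is essentially reprove the relevant special case of that cited theorem from scratch: normalizing to $0,1,\infty$, writing $f/g=e^{\alpha}$ and $(f-1)/(g-1)=e^{\beta}$ with $\alpha,\beta$ polynomials of degree at most $\lfloor\rho\rfloor$ (finiteness of $\rho(g)$ coming from Lemma \ref{lem1}, exactly as in the paper), eliminating $g$ to get $f=(1-e^{\beta})/(1-e^{\beta-\alpha})$, and reading off the integer upper bound $\rho(f)\le\lfloor\rho\rfloor$, which contradicts non-integrality unless the degenerate case $e^{\beta}\equiv 1$, i.e.\ $f\equiv g$, occurs. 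The steps all check out: the CM sharing of $0$ and $\infty$ does make $f/g$ zero-free and pole-free, the bound $T(r,e^{\alpha})\le T(r,f)+T(r,g)+O(1)$ is valid, and the crude order estimate on the quotient avoids any need to track cancellation. Your version buys self-containedness and makes transparent exactly where non-integrality of the order is used; the paper's version buys brevity at the cost of a black-box citation. Only cosmetic quibbles remain: $T(r,f)=O(r^{m})$ should strictly be deduced via $T(r,e^{P})\sim cr^{\deg P}$ for polynomial $P$ (which is fine), and you could equally well get $\rho(g)\le\rho(f)$ directly from the second fundamental theorem without Lemma \ref{lem1}, though using it is harmless.
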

\begin{proof}
  The proof follows from Lemma \ref{lem1} and Theorem 2.19 of (\cite{YY}).
\end{proof}
\section{Proof of the theorems}
\begin{proof}[\textbf{Proof of the Theorem \ref{thb1.1}}]
Given that $f,g\in M_{1}(\mathbb{C})$. Thus
\bea\label{neq1} N(r,\infty;f)=O(\ln r),  N(r,\infty;g)=O(\ln r).\eea
Now, we put $$Q(z)=(z-\alpha_{1})(z-\alpha_{2})\ldots(z-\alpha_{n}),$$
and
$$F(z):=\frac{1}{Q(f(z))}~~\text{and}~~G(z):=\frac{1}{Q(g(z))}.$$
Thus from equation (\ref{eqn1}), we have $(-1)^{n+1}\alpha_{1}\alpha_{2}\ldots \alpha_{n}P'(z)=Q'(z)$. Let $S(r):(0,\infty)\rightarrow\mathbb{R}$ be any function satisfying $S(r)=o(T(r,F)+T(r,G))$ for $r\rightarrow\infty$ outside a set of finite Lebesgue measure.\par
Let
$$H(z):=\frac{F''(z)}{F'(z)}-\frac{G''(z)}{G'(z)}.$$
Now, we consider two cases:\par
\textbf{Case-I} First we assume that $H\not\equiv 0$. Since $H(z)$ can be expressed as
$$H(z)=\frac{G'(z)}{F'(z)}\left(\frac{F'(z)}{G'(z)}\right)',$$
so all poles of $H$ are simple. Also, \textbf{poles of $H$ may occur} at
\begin{enumerate}
 \item poles of $F$ and $G$,
  \item zeros of $F'$ and $G'$.
\end{enumerate}
But using the Laurent series expansion of $H$, it is clear that \enquote{simple poles} of $F$ (hence, that of $G$) are the zeros of $H$. Thus
\bea \label{equn1.1} N(r,\infty;F|=1)=N(r,\infty;G|=1)\leq N(r,0;H), \eea
where $N(r,\infty;F|=1)$ is the the counting function of simple poles of $F$.
Using the lemma of logarithmic derivative and the first fundamental theorem, (\ref{equn1.1}) can be written as
\bea \label{equn1.2} N(r,\infty;F|=1)=N(r,\infty;G|=1)\leq N(r,\infty;H)+S(r). \eea
Let $\lambda_{1},\lambda_{2},\ldots,\lambda_{k}$ be the $k$- distinct zeros of $P'(z)$. Since $$F'(z)=-\mu\frac{f'(z)P'(f(z))}{(Q(f(z)))^{2}},~~ G'(z)=-\mu\frac{g'(z)P'(g(z))}{(Q(g(z)))^{2}},$$ where $\mu=(-1)^{n+1}\alpha_{1}\alpha_{2}\ldots \alpha_{n}$, and $f$, $g$ share $(S_{1},2)$, by simple calculations, we can write
\bea \label{equn1.3} N(r,\infty;H)&\leq& \sum_{j=1}^{k}\left(\overline{N}(r,\lambda_{j};f)+\overline{N}(r,\lambda_{j};g)\right)+\ol{N}_{0}(r,0;f')+\ol{N}_{0}(r,0;g') \\
\nonumber &&+ \ol{N}(r,\infty;f)+\ol{N}(r,\infty;g)+\ol{N}_{\ast}(r,\infty;F,G),\eea
$\ol{N}_{0}(r,0;f')$ denotes the reduced counting function of zeros of $f'$, which are not zeros of $\prod_{i=1}^{n}(f-\alpha_{i})\prod_{j=1}^{k}(f-\lambda_{j})$. Similarly $\ol{N}_{0}(r,0;g')$ is defined. Also, $\ol{N}_{\ast}(r,\infty;F,G)$ is the  reduced counting function of those poles of $F$ whose multiplicities differ from the multiplicities of the corresponding poles of $G$.\par
Now, using the second fundamental theorem to the functions $f$ and $g$, we obtain
\bea \label{equn1.444} && (n+k-1)\left(T(r,f)+T(r,g)\right)\\
\nonumber &\leq& \ol{N}(r,\infty;f)+\sum_{i=1}^{n}\ol{N}(r,\alpha_{i};f)+\sum_{j=1}^{k}\overline{N}(r,\lambda_{j};f)-\ol{N}_{0}(r,0;f')\\
\nonumber &&+\ol{N}(r,\infty;g)+\sum_{i=1}^{n}\ol{N}(r,\alpha_{i};g)+\sum_{j=1}^{k}\overline{N}(r,\lambda_{j};g)-\ol{N}_{0}(r,0;g')\\
\nonumber &&+S(r,f)+S(r,g).\eea
Now, using the inequalities (\ref{equn1.2}), (\ref{equn1.3}) and (\ref{equn1.444}), we have
\bea \label{equn1.4} && (n+k-1)\left(T(r,f)+T(r,g)\right)\\
\nonumber &\leq& \ol{N}(r,\infty;f)+\ol{N}(r,\infty;g)+\ol{N}(r,\infty;F)+\ol{N}(r,\infty;G)\\
\nonumber &+&\sum_{j=1}^{k}\left(\overline{N}(r,\lambda_{j};f)+\overline{N}(r,\lambda_{j};g)\right)-\ol{N}_{0}(r,0;f')-\ol{N}_{0}(r,0;g')+S(r)\\
\nonumber &\leq& 2\left(\ol{N}(r,\infty;f)+\ol{N}(r,\infty;g)\right)+2\sum_{j=1}^{k}\left(\overline{N}(r,\lambda_{j};f)+\overline{N}(r,\lambda_{j};g)\right)\\
\nonumber &+&\ol{N}(r,\infty;F|\geq2)+\ol{N}(r,\infty;G)+\ol{N}_{\ast}(r,\infty;F,G)+S(r).\eea
Noting that
\beas \ol{N}(r,\infty;F)-\frac{1}{2}N(r,\infty;F|=1)+\frac{1}{2}\ol{N}_{\ast}(r,\infty;F,G)\leq \frac{1}{2}N(r,\infty;F),\\
\ol{N}(r,\infty;G)-\frac{1}{2}N(r,\infty;G|=1)+\frac{1}{2}\ol{N}_{\ast}(r,\infty;F,G)\leq \frac{1}{2}N(r,\infty;G).
\eeas
Thus (\ref{equn1.4}) can be written as
\bea \label{equn1.5} && (n+k-1)\left(T(r,f)+T(r,g)\right)\\
\nonumber &\leq& 2\left(\ol{N}(r,\infty;f)+\ol{N}(r,\infty;g)\right)+(2k+\frac{n}{2})(T(r,f)+T(r,g))+S(r)\\
\nonumber &\leq& (2k+\frac{n}{2})(T(r,f)+T(r,g))+O(\ln r)+S(r),\eea
which contradicts the assumption $n>2k+2$. Thus $H\equiv 0$.\par
\textbf{Case-II} Next, we assume that $H\equiv 0$. Then by integration, we have
\bea\label{es1} \frac{1}{Q(f(z))}&\equiv&\frac{c_0}{Q(g(z))}+c_{1},\eea
where $c_{0}$ is a non zero complex constant. Thus
$$T(r,f)=T(r,g)+O(1).$$
Now, we consider two cases:\\
\textbf{Subcase-I} First we assume that $c_{1}\not=0.$ Then equation (\ref{es1}) can be written as
$$Q(f)\equiv \frac{Q(g)}{c_{1}Q(g)+c_0}.$$
Thus $$\overline{N}(r,-\frac{c_0}{c_1};Q(g))=\overline{N}(r,\infty;Q(f))=O(\ln r).$$
We know that $Q(z)+\mu=\mu P(z)$. Now, if $\mu\not=\frac{c_0}{c_1}$, then applying the second fundamental theorem to $Q(g)$, we obtain
\beas &&n T(r,g)+O(1)\\
&=&T\left(r, Q(g)\right)\\
&\leq& \overline{N}\left(r,\infty;Q(g)\right)+\overline{N}\left(r,-\mu;Q(g)\right)+\overline{N}\left(r,-\frac{c_0}{c_1};Q(g)\right)+S(r,Q(g))\\
&\leq& \overline{N}\left(r,0;P(g)\right)+O(\ln r)+S(r,g)\\
&\leq& \Gamma_{1}T(r,g)+O(\ln r)+S(r,g),\eeas
which is impossible as $n> 2\Gamma_{1}$. Thus $\mu=\frac{c_0}{c_1}$. Hence
$$Q(f)\equiv \frac{Q(g)}{c_{0}P(g)}.$$
Since $P(z)$ has $m_{1}$ simple zeros and $m_{2}$ multiple zeros, so we can assume
$$P(z)=a_{0}(z-b_{1})(z-b_{2})\ldots(z-b_{m_{1}})(z-c_{1})^{l_{1}}(z-c_{2})^{l_{2}}\ldots(z-c_{m_{2}})^{l_{m_{2}}},$$
where $l_{i}\geq 2$ for $1\leq i\leq m_{2}$. Thus every zero of $g-b_{j}$ ($1\leq j \leq m_{1}$) has a multiplicity atleast $n$. As $P'(z)$ has at least two zeros, so, $l_{i}< n$, and hence each zero of $g-c_{i}$ ($1\leq i \leq m_{2}$) has a multiplicity atleast $2$.\par Thus applying the second fundamental theorem to $g$, we have
\beas
&&(m_{1}+m_{2}-1)T(r,g)\\
&\leq& \overline{N}(r,\infty;g)+\sum\limits_{j=1}^{m_1}\overline{N}(r,b_{j};g)+\sum\limits_{i=1}^{m_2}\overline{N}(r,c_{i};g)+S(r,g)\\
&\leq& \frac{1}{n}\sum\limits_{j=1}^{m_1}N(r,b_{j};g)+\frac{1}{2}\sum\limits_{i=1}^{m_2}N(r,c_{i};g)+O(\ln r)+S(r,g)\\
&\leq& \frac{m_1}{n}T(r,g)+\frac{m_{2}}{2}T(r,g)+O(\ln r)+S(r,g)\\
&\leq& \frac{m_1}{2}T(r,g)+\frac{m_{2}}{2}T(r,g)+O(\ln r)+S(r,g)\\
&\leq& \frac{m_1+m_2}{2}T(r,g)+O(\ln r)+S(r,g),
\eeas
which is impossible as $\Gamma_{1}\geq 3$.\\
\textbf{Subcase-II} Next we assume that $c_{1}=0$. Then equation (\ref{es1}) can be written as
$$Q(g)\equiv c_{0} Q(f).$$ Thus
$$P(g)\equiv c_{0}(P(f)-1+\frac{1}{c_0}).$$
If $c_{0}\not=1$, then using the second fundamental theorem to $P(f)$, we obtain
\beas &&n T(r,f)+O(1)\\
&=&T\left(r, P(f)\right)\\
&\leq& \overline{N}\left(r,\infty;P(f)\right)+\overline{N}\left(r,0;P(f)\right)+\overline{N}\left(r,1-\frac{1}{c_{0}};P(f)\right)+S(r,f)\\
&\leq& \overline{N}\left(r,0;P(f)\right)+\overline{N}\left(r,0;P(g)\right)+O(\ln r)+S(r,f)\\
&\leq& \Gamma_{1}T(r,f)+\Gamma_{1}T(r,g)+O(\ln r)+S(r,f),
\eeas
which is impossible as $n> 2\Gamma_{1}$. Thus $c_0=1$, i.e.,
$$Q(f)\equiv Q(g).$$
Thus $$(f-\alpha_1)(f-\alpha_2)\ldots(f-\alpha_n)\equiv (g-\alpha_1)(g-\alpha_2)\ldots(g-\alpha_n).$$
Given that $f$ and $g$ share $\{\beta_{1},\beta_{2}\}$ IM and $$(\beta_{1}-\alpha_{1})(\beta_{1}-\alpha_{2})\ldots(\beta_{1}-\alpha_{n})\not=(\beta_{2}-\alpha_{1})(\beta_{2}-\alpha_{2})\ldots(\beta_{2}-\alpha_{n}).$$ Thus, if $z_{0}$ be a $\beta_{1}$ point of $f$, then $z_{0}$ can't be a $\beta_{2}$ point of $g$. Thus $f$ and $g$ share $\beta_1$ and $\beta_2$ IM.\par
Consequently, we can say that $f$ and $g$ share $\beta_1$, $\beta_2$ and $\infty$ CM. Thus by Lemma \ref{lem2}, $f\equiv g$. This completes the proof.
\end{proof}
\begin{proof}[\textbf{Proof of the Theorem \ref{thb1.2}}]
First we observe that if $F$ and $G$ share $\infty$ IM, then
\beas &&\ol{N}(r,\infty;F)+\ol{N}(r,\infty;G)-N(r,\infty;F|=1)-\frac{1}{2}\ol{N}_{\ast}(r,\infty;F,G)\\
&&\leq \frac{1}{2}\left(N(r,\infty;F)+N(r,\infty;G)\right).
\eeas
If $F$ and $G$ are defined as in the proof of Theorem \ref{thb1.1}, then
\beas
&&\ol{N}_{\ast}(r,\infty;F,G)\\
&\leq& N(r,\infty;F)-\overline{N}(r,\infty;F)+N(r,\infty;G)-\overline{N}(r,\infty;G)\\
&\leq& N(r,0;Q(f))-\overline{N}(r,0;Q(f))+N(r,0;Q(g))-\overline{N}(r,0;Q(g))\\
&\leq& N(r,0;f'|f\not=0)+N(r,0;g'|g\not=0)\\
&\leq& N(r,0;\frac{f'}{f})+N(r,0;\frac{g'}{g})\\
&\leq& N(r,\infty;\frac{f'}{f})+N(r,\infty;\frac{g'}{g})+S(r,f)+S(r,g)\\
&\leq& N(r,\infty;f)+N(r,0;f)+N(r,\infty;g)+N(r,0;g)+S(r,f)+S(r,g).\eeas
Thus proceeding similarly as Case-I of Theorem \ref{thb1.1}, (\ref{equn1.4}) can be written as
\bea \label{equn1.5} && (n+k-1)\left(T(r,f)+T(r,g)\right)\\
\nonumber &\leq& \frac{7}{2}\left(\ol{N}(r,\infty;f)+\ol{N}(r,\infty;g)\right)+(2k+\frac{n}{2}+\frac{3}{2})(T(r,f)+T(r,g))+S(r)\\
\nonumber &\leq& (2k+\frac{n}{2}+\frac{3}{2})(T(r,f)+T(r,g))+O(\ln r)+S(r),\eea
which contradicts the assumption $n>2k+5$.\\
Remaining part of the proof is similar to the case-II of Theorem \ref{thb1.1}.
\end{proof}
\begin{proof}[\textbf{Proof of the Theorem \ref{thb1.3}}]
Given $f,g\in M_{1}(\mathbb{C})$. Thus
\bea\label{neq1} N(r,\infty;f)=O(\ln r),  N(r,\infty;g)=O(\ln r).\eea
Now, we put $$Q(z)=(z-\alpha_{1})(z-\alpha_{2})\ldots(z-\alpha_{n}),$$
and
$$F(z):=\frac{1}{Q(f(z))}~~\text{and}~~G(z):=\frac{1}{Q(g(z))}.$$
Further suppose that
$$P'(z)=b_{0}(z-\beta_{1})^{q_1}(z-\beta_{2})^{q_2}.$$
Thus $S_{1}=\{\alpha_{1},\alpha_{2},\ldots,\alpha_{n}\}$ and $S_{2}=\{\beta_{1},\beta_{2}\}$. Since $f$ and $g$ share $S_{2}$ IM, so
$$\sum_{j=1}^{2}\overline{N}(r,\beta_{j};f)=\sum_{j=1}^{2}\overline{N}(r,\beta_{j};g).$$
Let $S(r):(0,\infty)\rightarrow\mathbb{R}$ be any function satisfying $S(r)=o(T(r,F)+T(r,G))$ for $r\rightarrow\infty$ outside a set of finite Lebesgue Measure.\par
Let
$$H(z):=\frac{F''(z)}{F'(z)}-\frac{G''(z)}{G'(z)}.$$
Now, we consider two cases:\par
\textbf{Case-I} First we assume that $H\not\equiv 0$. Since $H(z)$ can be expressed as
$$H(z)=\frac{G'(z)}{F'(z)}\left(\frac{F'(z)}{G'(z)}\right)',$$
so all poles of $H$ are simple. Also, \textbf{poles of $H$ may occur} at
\begin{enumerate}
 \item poles of $F$ and $G$,
  \item zeros of $F'$ and $G'$.
\end{enumerate}
But using the Laurent series expansion of $H$, it is clear that \enquote{simple poles} of $F$ (hence, that of $G$) is a zero of $H$. Thus
\bea \label{equnb1.1} N(r,\infty;F|=1)=N(r,\infty;G|=1)\leq N(r,0;H). \eea
Using the lemma of logarithmic derivative and the first fundamental theorem, (\ref{equnb1.1}) can be written as
\bea \label{equnb1.2} N(r,\infty;F|=1)=N(r,\infty;G|=1)\leq N(r,\infty;H)+S(r). \eea
Since $$F'(z)=-\mu\frac{f'(z)P'(f(z))}{(Q(f(z)))^{2}},~~ G'(z)=-\mu\frac{g'(z)P'(g(z))}{(Q(g(z)))^{2}},$$ where $\mu=(-1)^{n+1}\alpha_{1}\alpha_{2}\ldots \alpha_{n}$, and $f$, $g$ share $(S_1,2)$ and $(S_2,0)$, by simple calculations, we can write
\bea \label{equnb1.3} N(r,\infty;H)&\leq& \sum_{j=1}^{2}\overline{N}(r,\beta_{j};f)+\ol{N}_{0}(r,0;f')+\ol{N}_{0}(r,0;g') \\
\nonumber &&+ \ol{N}(r,\infty;f)+\ol{N}(r,\infty;g)+\ol{N}_{\ast}(r,\infty;F,G),\eea
where $\ol{N}_{0}(r,0;f')$ denotes the reduced counting function of zeros of $f'$, which are not zeros of $\prod_{i=1}^{n}(f-\alpha_{i})\prod_{j=1}^{2}(f-\beta_{j})$. Similarly, $\ol{N}_{0}(r,0;g')$ is defined.
Now, using the second fundamental theorem and (\ref{equnb1.2}), (\ref{equnb1.3}), we have
\bea \label{equnb1.4} && (n+1)\left(T(r,f)+T(r,g)\right)\\
\nonumber &\leq& \ol{N}(r,\infty;f)+\sum_{i=1}^{n}\overline{N}(r,\alpha_{i};f)+\sum_{j=1}^{2}\overline{N}(r,\beta_{j};f)-\ol{N}_{0}(r,0;f')\\
\nonumber &&+\ol{N}(r,\infty;g)+\sum_{i=1}^{n}\overline{N}(r,\alpha_{i};g)+\sum_{j=1}^{2}\overline{N}(r,\beta_{j};g)-\ol{N}_{0}(r,0;g')\\
\nonumber &&+S(r,f)+S(r,g)\\
\nonumber &\leq& 2\left(\ol{N}(r,\infty;f)+\ol{N}(r,\infty;g)\right)+\sum_{j=1}^{2}\left(2\overline{N}(r,\beta_{j};f)+\overline{N}(r,\beta_{j};g)\right)\\
\nonumber &+&\ol{N}(r,\infty;F|\geq2)+\ol{N}(r,\infty;G)+\ol{N}_{\ast}(r,\infty;F,G)+S(r).\eea
Noting that
\beas \ol{N}(r,\infty;F)-\frac{1}{2}N(r,\infty;F|=1)+\frac{1}{2}\ol{N}_{\ast}(r,\infty;F,G)\leq \frac{1}{2}N(r,\infty;F),\\
\ol{N}(r,\infty;G)-\frac{1}{2}N(r,\infty;G|=1)+\frac{1}{2}\ol{N}_{\ast}(r,\infty;F,G)\leq \frac{1}{2}N(r,\infty;G).
\eeas
Thus (\ref{equnb1.4}) can be written as
\bea \label{equnb1.5} && (n+1)\left(T(r,f)+T(r,g)\right)\\
\nonumber &\leq& 2\left(\ol{N}(r,\infty;f)+\ol{N}(r,\infty;g)\right)+(3+\frac{n}{2})(T(r,f)+T(r,g))+S(r)\\
\nonumber &\leq& (3+\frac{n}{2})(T(r,f)+T(r,g))+O(\ln r)+S(r),\eea
which contradicts the assumption $n> 4$. Thus $H\equiv 0$.\par
\textbf{Case-II} Next we assume that $H\equiv 0$. Remaining part of the proof is similar to the proof of case-II of Theorem \ref{thb1.1}.
\end{proof}
\begin{proof}[\textbf{Proof of the Theorem \ref{thb1.4}}] The idea of the proof follows from the proof of Theorem \ref{thb1.3} and Theorem \ref{thb1.2}.
\end{proof}
\begin{center} {\bf Acknowledgement} \end{center}
The authors are grateful to the anonymous referees for their valuable suggestions which considerably improved the presentation of the paper.\par
The research work of the first and the fourth authors are  supported by the Department of Higher Education, Science and Technology \text{\&} Biotechnology, Govt. of West Bengal under the sanction order no. 216(sanc) /ST/P/S\text{\&}T/16G-14/2018 dated 19/02/2019.\par
The second and the third authors are thankful to the Council of Scientific and Industrial Research, HRDG, India for granting Junior Research
Fellowship (File No.: 09/106(0179)/2018-EMR-I and 08/525(0003)/2019-EMR-I respectively) during the tenure of which this work was done.

\end{document}